\documentclass[11pt]{article}   	

\usepackage{amsmath,amssymb,latexsym,amsthm,mathrsfs,color}
\usepackage{graphicx,authblk}
\usepackage{graphicx}				
\usepackage{amssymb}
\usepackage{authblk}
\newtheorem{theo}{Theorem}[section]
\newtheorem{pro}[theo]{Proposition}
\newtheorem{lem}[theo]{Lemma}

\newcommand{\ra}{\rightarrow}

\theoremstyle{definition}

\newtheorem{exa}{Example}[section]

\theoremstyle{remark}
\newtheorem{rem}[theo]{Remark}

\title{The hit-and-run version of top-to-random}
\author[1]{Samuel Boardman\thanks{Research  supported by NSF RTG grant  DMS-1645643. Email: stb89@cornell.edu}}
 \author[2]{
 Daniel Rudolf\thanks{Research partially supported by DFG project 389483880. Email: 
 daniel.rudolf@uni-goettingen.de
}}

 \author[1]{ Laurent Saloff-Coste\thanks{Research partially supported by NSF  grant DMS-1707589. Email: lps2@cornell.edu}}
  
  \affil[1]{Department of Mathematics, Cornell University, Ithaca, NY, 14853, USA}
   \affil[2]{Institut f\"ur Mathematische Stochastik\\  Georg-August-Universit\"at G\"ottingen,
 Goldschmidtstr. 3-5, 37077 G\"ottingen,
 Germany}  

\begin{document}
\maketitle

\begin{abstract}We study an example of a {\em hit-and-run} random walk on the symmetric group $\mathbf S_n$. Our starting point is the well understood {\em top-to-random} shuffle. In the hit-and-run version, at each {\em single step}, after picking the point of insertion, $j$, uniformly at random in $\{1,\dots,n\}$, the top card is inserted in the $j$-th position $k$ times in a row where $k$ is uniform in $\{0,1,\dots,j-1\}$. The question is, does this accelerate mixing significantly or not?  We show that, in $L^2$ and sup-norm, this accelerates mixing at most by a constant factor (independent of $n$). Analyzing this problem in total variation is an interesting open question. We show that, in general, hit-and-run random walks on finite groups have non-negative spectrum.
\end{abstract}

\section{Introduction}  Given a finite group and a generating $k$-tuple,  consider the simple random walk on $G$ associated to this $k$-tuple.
At each integer time, this random walk moves from the current position $X_n$ to $X_ng$ where $g$ is picked uniformly at random among the $k$ generators, independently of all previous steps.  To define the {\em hit-and-run walk} based on the same generating $k$-tuple, for any group element $g$, call $m_g$ the order (i.e., exponent) of $g$. At each step,  pick one of the $k$ generators uniformly at random, call it $g$, pick $\ell$ uniformly in $\{0,\dots, m_g-1\}$, and move to $X_ng^\ell$.  

This defines a natural variation on simple random walks which allows for long jumps when the orders of some of the  generators are relatively large. As often in the study of random walks on finite groups,  it is easier to think about the problem for a family of random walks on a sequence of finite groups whose sizes increase to infinity.

Two of the most basic questions one can ask concerning a family of ergodic random walks on some finite groups whose sizes increase to infinity are:  How long does the walk take to be approximately uniformly distributed? Does the cut-off phenomenon occur? that is, is there a rapid transition from being far from equilibrium to reaching approximate equilibrium?  See \cite{Ald,Diabook,Dia-cutoff} for introductions to these problems. In the context of hit-and-run random walks, the following additional question emerges: Does the hit-and-run version converge faster than the simple random walk version?, i.e., does the extra randomization help and if so, how much?

We study these problems in the case of the hit-and-run walk associated with one of the classic random walks on the symmetric group, top-to-random. See the Example \ref{exa-HRTR} below. We show that, if convergence is measured in $L^2$, the hit-and-run walk and the original top-to-random walk both take order $n\log n$ to converge. What exactly happens to the hit-and-run walk in total variation is left as  an open question but it seems plausible that, again, it take order $n\log n$ to converge, as top-to-random does \cite{Ald,Diabook}. We give an analysis of the Markov chain consisting in following a fixed single card. While studying this example and based on some numerical evidence, the first and last authors conjectured that
the hit-and-run top-to-random walk had only non-negative eigenvalues. The second author provided a proof of this fact, and more, based on earlier works on hit-and-run algorithms \cite{RUpos}: for any generating tuple on any finite group, the associated hit-and-run walk has non-negative spectrum. See Theorem \ref{th-pos} and Section \ref{sec-pos}.

\subsection{Random walks based on generating $k$-tuples}  Let $G$ be a finite group. 
For any generating $k$-tuple $S=(g_1,\dots, g_k)$,
let $\mu_S$ be    probability measure 
$$\mu_S=\frac{1}{k}\sum_{i=1}^k\delta_{g_i},\;\;\delta_{g}(h)=\left\{\begin{array}{l} 1\mbox{ if } h=g\\0 \mbox{ otherwise}.\end{array}\right.$$

The random walk on the group $G$ driven by the measure $\mu_S$  above or any probability measure $\mu$, for that matter, is the Markov chain with state space $G$ and Markov kernel
$$M(x,y)=\mu(x^{-1}y).$$ 
The uniform measure $u=u_G$ on $G$ is always invariant for such a Markov chain and it is useful to consider the (convolution) operator 
$$f\mapsto  Mf(x)=\sum_yM(x,y)f(y)$$
acting on $L^2(G)=L^2(G,u)$.  At any (discrete) time $t$, the iterated kernel $M^t(x,y)$ is given by the $t$-fold convolution $\mu^{(t)}$ of $\mu$ by itself
in the form $M^t(x,y)=\mu^{(t)}(x^{-1}y)$.
The adjoint  $M^*$ of $M$ satisfies $M=M^*$ if and only if $\mu$ is symmetric in the sense that
$\check{\mu}(x)=\mu(x^{-1} )=\mu(x)$. 
 
\begin{exa}  \label{basicexa}
 The following examples on the symmetric group $\mathbf S_n$ will be of particular interest to us.  See \cite{Ald,DSh,Diabook,DFP,DSCcompG, DBHyp,BN,SCRW}.
\begin{itemize} 
\item (Top-to-random)  $S=(\sigma_i)_1^n$ where $\sigma_i$ take the top card of the deck and place it in position $i$. In cycle notation, $\sigma_i=(i,i-1,\dots, 2,1)$.  The probability measure $\mu_S$ in this example is not symmetric.
\item (Random-to-random or random insertions) $S= (\sigma_{ij})_{1\le i, j\le n}$ (ordered lexicographically) where $\sigma_{ij}$ is  ``take the card in position $i$ and insert it in position $j$.'' In cycle notation, when $i<j$, $\sigma_{ij}=(j,j-1,\dots,i)$.  Note also $\sigma_{ij}=\sigma_{ji}^{-1}$ and $\sigma_{ii}=e$. The corresponding measure $\mu_S$ gives probability $1/n$ to the identity element $e$ and probability $1/n^2$ to each $\sigma_{ij}$, $i\neq j$ with the caveat that when $|j-i|=1$ , $\sigma_{ij}=\sigma_{ji}$ so that
the corresponding transposition $\tau=\sigma_{ij}=\sigma_{ji}$ actually has probability $2/n^2$.
\item (Random transposition) Take $S=(\tau_{ij})_{1\le i \le j\le n}$ where $\tau_{ij}$ is transpose the cards in positions $i$ and $j$ (i.e., $\tau_{ij}=(i,j)$). This tuple $S$ contains each true transposition $(i,j)$, $1\le i<j\le n$, twice, and also includes $n$ copies of the identity $(i,i)$, $1\le i\le n$. Equivalently, we can think of $(i,j)$ being picked uniformly independently at random from $\{1,\dots,n\}$ so that the probability measure $\mu_S$ give probability $1/n$ to the identity and probability $2/n^2 $ to any transposition.  
\end{itemize}
\end{exa}

All these examples are ergodic in the sense that the distribution at time $t$ of the associated Markov chain converges to the uniform distribution $u$ on $\mathbf S_n$.  
\subsection{Hit-and-run walks based on generating tuples}

We now consider the following modification of the measure $\mu_S$ associated with a fixed generating tuple $S=(s_1,\dots,s_k)$ which we call $q_S$.
For each $s_i\in S$, let $m_i$ be its order in $G$ (the smallest $m$ such that $s_i^m=e$). Define
\begin{equation}\label{def-HR} q_S=\frac{1}{k}\sum_{i=1}^k \frac{1}{m_i}\sum_{j=0}^{m_i-1}\delta_{s_i^j}.\end{equation}
To describe $q_S$ in words,  $q_S$ is the distribution of a random element in $G$ chosen as follows: Pick $i$ uniformly in $\{1,\dots,k\}$, pick $m$ uniformly in $\{0,\dots,m_i-1\}$,
output $s_i^m\in G$. This is reminiscent to the so-called hit-and-run algorithms, hence the name.

The question we want to address is whether or not the random walk driven by $q_S$ mixes faster than the random walk driven by $\mu_S$. Does taking a uniform step in the direction of the generator $s_i$, that is, along the one parameter subgroup $\{s_i^m: 0\le m\le m_i-1\}$, instead of just a single $s_i$-step, speeds-up convergence or not?

\begin{exa}[Our main example: hit-and-run for top-to-random] \label{exa-HRTR}
Top-to-random on $\mathbf S_n$ is obtained by considering the generating  $n$-tuple 
$$S= \{ (k,k-1,\dots, 2,1): k=1,\dots,n\}=\{\sigma_k: k=1,\dots,n\}$$ where $\sigma_k:= (k, k - 1,\dots,2, 1)$. The associated simple random walk measure is
$$\mu_S(\sigma) =\left\{
  \begin{array}{cl}
    \frac{1}{n}  & \quad \text{if } \sigma\in S, \\
    0  & \quad \text{otherwise.}
  \end{array}\right.  $$
The associated hit-and-run measure  is given by
\begin{equation}\label{HRTR}q(\sigma)=q_S(\sigma)=\frac{1}{n}\sum_{i=1}^n \frac{1}{i}\sum_{j=0}^{i-1}\delta_{\sigma_i^j}(\sigma).\end{equation}
This probability measure is symmetric and gives positive probability to order $n^2$ distinct permutations.

Let us now describe our findings and related open questions regarding the hit-and-run for top-to-random shuffle.
\begin{itemize}
\item (Facts) In $L^2$, the mixing time for hit-and-run for top-to-random with $n$ cards is of order $n\log n$, the same order than the top-to-random shuffle. See Section \ref{sec-HRTR2}.   There is a cut-off in $L^2$ but the cut-off time is not known.  In $L^1$ (i.e., total variation), the mixing time is at least of order $n$ and no more than order $n\log n$. 
\item  (Open questions) What is the cutoff time in $L^2$ for the hit-and-run version of top to random? 
How does it compare precisely with $n\log n$, the cutoff time for the top-to-random shuffle?
\item (Open questions)  Is there a cut-off in $L^1$ (i.e., total variation)? What is the order of magnitude of the $L^1$-mixing time? Describe a simple statistics
that provides a good lower bound for the mixing time in $L^1$.
\item (Conjecture)  There is a cut-off in $L^1$ and the rough order of the $L^1$ cut-off time is $n\log n$.
\end{itemize}
\end{exa}

Regarding general hit-and-run walks, we prove the following result.
\begin{theo} \label{th-pos}Let $G$ be a finite group and $S=(s_1,\dots,s_k)$ be a generating tuple. The eigenvalues $-1\le \beta_{|G|-1}\le \dots\le \beta_1\le \beta_0=1$ of the hit-and-run walk on $G$ based on $S$ driven by the symmetric measure $q_{S}$ at {\em (\ref{def-HR})} are all non-negative, that is
$0\le \beta_{|G|-1}\le \dots\le \beta_1\le \beta_0=1$.
\end{theo}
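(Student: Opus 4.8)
The plan is to recognize the hit-and-run operator as an \emph{average of orthogonal projections}, after which nonnegativity of the spectrum is immediate. This is the group-theoretic incarnation of the principle underlying \cite{RUpos}: a single hit-and-run step is a conditional expectation, hence a projection, along a ``line'' --- here a coset of the cyclic subgroup generated by one of the $s_i$.

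Concretely, for each $i$ let $H_i=\langle s_i\rangle=\{e,s_i,\dots,s_i^{m_i-1}\}$, a cyclic subgroup of order $m_i=\mathrm{ord}(s_i)$, and let $u_i$ be the uniform probability measure on $H_i$. Then $q_S=\frac1k\sum_{i=1}^k u_i$. Writing $M$ and $P_i$ for the convolution operators on $L^2(G)=L^2(G,u)$ driven by $q_S$ and by $u_i$ respectively, linearity of the map $\mu\mapsto M_\mu$ gives $M=\frac1k\sum_{i=1}^k P_i$, where $P_if(x)=\frac1{m_i}\sum_{j=0}^{m_i-1}f(xs_i^{\,j})$.

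Next I would check that each $P_i$ is an orthogonal projection. Idempotence $P_i^2=P_i$ holds because, for fixed $j$, the map $j'\mapsto j+j'\bmod m_i$ permutes $\{0,\dots,m_i-1\}$, so the double sum defining $P_i^2$ collapses to $P_i$. Self-adjointness $P_i^*=P_i$ follows from the change of variable $y=xs_i^{\,j}$ together with $H_i^{-1}=H_i$; equivalently, each $u_i$ is symmetric, which is exactly why $q_S$ is symmetric. Thus $P_i=P_i^*=P_i^2$, so for every $f\in L^2(G)$ we get $\langle P_if,f\rangle=\langle P_i^2f,f\rangle=\|P_if\|_{2}^2\ge 0$; that is, $P_i$ is nonnegative definite.

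It remains to assemble the pieces: a nonnegative linear combination of nonnegative-definite self-adjoint operators is again nonnegative definite and self-adjoint, so $M=\frac1k\sum_{i=1}^k P_i$ is self-adjoint with $\langle Mf,f\rangle\ge 0$ for all $f$. Being a Markov operator, $M$ has real spectrum contained in $[-1,1]$, and the constant function is fixed by every $P_i$ and hence by $M$, which gives $\beta_0=1$. Combining, all eigenvalues lie in $[0,1]$, as claimed. I do not expect a genuine analytic obstacle here; the entire content is the structural observation that $q_S$ is a \emph{mixture of uniform measures on cyclic subgroups} and that convolution by the uniform measure on a subgroup is an orthogonal projection. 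The only points needing care are bookkeeping: that $m_i=\mathrm{ord}(s_i)$ makes $|H_i|=m_i$, so the hit-and-run step really is distributed as $u_i$, and that the symmetry of $q_S$ is precisely what makes the $P_i$ (and hence $M$) self-adjoint, so that one may legitimately speak of the real eigenvalues $\beta_j$.
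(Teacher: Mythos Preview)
Your argument is correct. The key structural insight---that the hit-and-run kernel is built from orthogonal projections, one for each generator---is the same as in the paper, but you implement it more directly. The paper passes to an auxiliary Hilbert space $H_{\text{aux}}=\mathbb{R}^{G\times\{1,\dots,k\}}$ and factors $Q=P^*RP$ with a single self-adjoint idempotent $R$ on $H_{\text{aux}}$; positivity then follows from $\langle Qf,f\rangle=\langle RPf,RPf\rangle_{\text{aux}}\ge 0$. Your version stays on $L^2(G)$ and writes $M=\frac{1}{k}\sum_{i=1}^k P_i$ with each $P_i$ (convolution by the uniform measure on $\langle s_i\rangle$) an orthogonal projection in its own right. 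The two are equivalent: the paper's $R$ acts block-diagonally on $H_{\text{aux}}\cong\bigoplus_{i=1}^k L^2(G)$, the $i$-th block being exactly your $P_i$, and $P^*RP$ simply averages those blocks. Your route is shorter and more transparent in this finite-group setting, since the fact that convolution by the uniform measure on a subgroup is an orthogonal projection is classical and needs no extra scaffolding; the paper's $P^*RP$ formulation has the advantage of matching the general template of \cite{RUpos}, which is designed to cover hit-and-run on state spaces where the ``lines'' are not cosets of a fixed subgroup and no such clean averaging decomposition is available.
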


The proof of this theorem is in Section \ref{sec-pos}.  Section \ref{sec-one} provides exact computations concerning the Markov chains obtained by following a single card. We explore the time to equilibrium for this Markov chain as a function of the starting position of the card that is followed, both in total variation and in $L^2$.    Section \ref{sec-HRTR2} studies the convergence of the hit-and-run top-to-random walk  on the symmetric group $\mathbf S_n$ in $L^2$-norm. We show that
the $L^2$-mixing time is of order $n\log n$ (Theorem \ref{th-L2}).

\subsection{Notions of convergence} We will discuss convergence to the uniform distribution using two different distances between probability measures  (or between their densities with respect to the uniform measure $u$). Let $\nu$ be a probability measure on a finite group $G$ and $u$ be the uniform distribution on $G$.

 Total variation (or  $\frac{1}{2}$-$L^1(G,u)$-norm) is defined by
\begin{eqnarray*}\|\nu-u\|_{\mbox{\tiny TV}}&=&\max_{A\subseteq G}\{\nu(A)-u(A)\}\\
&=& \frac{1}{2} \|(\nu/u)-1\|_1= \frac{1}{2}\sum_G|\nu-u|.\end{eqnarray*}

 Convergence in $L^2(G,u)$ is measured using the distance
\begin{eqnarray*} d_2(\nu,u)^2&=& \|(\nu/u)-1\|^2_2 \\
 &=& \sum_{g\in G} |(\nu(g)/u(g))-1|^2 u(g)
= |G|\sum_G|\nu-u|^2.
\end{eqnarray*}

Let $(G_n)_1^\infty$ be a sequence of finite groups such that  $|G_n|$ tends to infinity with $n$. Let $u_n$ be the uniform probability on $G_n$. We say that a sequence of probability measures $\mu_n$ on $G_n$, $n=1,2,\dots$,  has a cut-off at time $t_n$ in $L^p$, $p=1,2$, 
if $t_n\ra \infty$ and, for any $\epsilon>0$, 
$$\lim_{n\ra \infty}d_p(\mu_n^{((1+\epsilon) t_n)},u_n)=0 \mbox{ and  } d_p(\mu_n^{((1-\epsilon)t_n)},u_n) =l_\infty(p)$$
where $l_\infty(1)=1$ and $l_\infty(2)=+\infty$. 

Whenever the probability measure $\mu$ is symmetric, i.e., $\check{\mu}=\mu$, the associated convolution operator $f\mapsto Mf$  is diagonalizable with real eigenvalues $-1\le \beta_{|G|-1}\le \dots \le \beta_1 \le 
\beta_0=1$ and
$$d_2(\mu^{(t)},u)^2= \sum_1^{|G|-1} \beta_i^{2t},\;t=1,2,\dots.$$
Moreover,  $d_\infty(\mu^{(2t)},u)=\max_G|\left\{\frac{\mu}{\nu}-1|\right\}=|G|\mu^{(2t)}(e)-1 =d_2(\mu^{(t)},u)^2$.

Let us illustrate these definitions using the classical examples described above.

\begin{itemize} 
\item (Top-to-random)   Convergence in total variation occurs precisely at time $n\log n$  in the sense that, if we set $t(n,c)=n\log n+ cn$,
$$\lim_{n\ra \infty}\|\mu^{(t(n,c))}-u\|_{\mbox{\tiny TV}}  =\left\{\begin{array}{l} 1 \mbox{ if } c<0\\ 0\mbox{ if } c>0.\end{array}\right.$$
See \cite{Diabook,DFP}. With a little work, the results in \cite{DFP} easily imply \
$$\lim_{n\ra \infty}d_2(\mu^{(t(n,c))},u) =\left\{\begin{array}{lc} \infty &\mbox{ if } c<0\\ 0 &\mbox{ if } c>0.\end{array}\right. $$
\item (Random-to-random)  Convergence in total variation (and in $L^2$) occurs with a cut-off at time $(3n/4)\log n$. 
See \cite{BN}.
\item (Random transposition)  Convergence in total variation (and in $L^2$) occurs with a cutoff at time $(n/2)\log n$.
See \cite{DSh,Diabook,SCZRef}.
\end{itemize}

\section{Single-Card Markov Chain}\label{sec-one}
To investigate the complex behavior of the hit-and-run top-to-random chain, it behooves us to explore the dynamics of just a single card. We do so by defining a Markov chain $(X_t)^\infty_{t = 0}$ with state space $\{1, 2,\dots, n \}$ that represents the position of an arbitrarily chosen card after $t$ 
shuffle's iterations. This is a classical example of a function of a Markov chain that produces a Markov chain.

\subsection{Abstract projection} Before proceeding with the example, we  review some general aspects of this situation.  Abstractly, we start with a Markov kernel $Q$ on a state space $X$ (in our case, 
$Q(x,y)= q_S(x^{-1}y)$ on $\mathbf S_n$) and a lumping (or projection) map $p: X\ra \underline X$  which is surjective and has the property that
$$\sum_{y\in X: p(y)=\underline{y}}Q(x,y) = \underline{Q}(\underline{x},\underline{y})$$
depends only on $p(x)=\underline{x}$. This defines a Markov kernel on $\underline{X}$. If $Q$ has stationary measure $\pi$ then its push-forward $\underline{\pi}(\underline{x})=\pi (p^{-1}(\underline{x}))$ is stationary for $\underline{Q}$. Moreover,
$$\|Q^t(x,\cdot)-\pi\|_{\mbox{\tiny TV}} \ge \|\underline{Q}^t(\underline{x},\cdot)-\underline{\pi}\|_{\mbox{\tiny TV}}.$$
This simple comparison does not work well for the $L^2$ and $L^\infty$ convergence measured using $d_2$ and $d_\infty$ because normalization becomes an issue.  

Let $\beta$ and $\underline{\phi}$ be an eigenvalue and associated eigenfunction for the chain $\underline{Q}$. Then it is plain that the function 
$\phi(x)= \underline{\phi}\circ p(x)$ is an eigenfunction for $Q$ with eigenvalue $\beta$. Also, two orthogonal eigenfunctions $\underline{\phi}_1,\underline{\phi}_2$
for $\underline{Q}$ on $L^2(\underline{\pi})$ give orthogonal $\phi_1,\phi_2$ in $L^2(\pi)$ (we will not use this second fact). 

\subsection{Single card chain in $L^2$}
Let $q$ be the measure for the hit-and-run version of top-to-random defined at (\ref{HRTR}).
We consider the projection of $Q(x,y)=q(x^{-1}y)$ on $\{1,\dots,n\}$ corresponding to following the position of a single card.  To simplify notation, we set
$\underline{Q}=K$ and notice that the stationary (and reversible) measure for $K$ is the uniform measure on $\{1,\dots, n\}$. 
The transition probabilities $K(i,j)$, $i,j\in \{1,\dots, n\}$ are given by
$$K(i,j) = \begin{cases}
\frac{1}{n}\sum\limits_{k \geq j}^n\frac{1}{k} + \frac{i - 1}{n} &\text{if } i = j \\
\frac{1}{n}\sum\limits_{k \geq i}^n\frac{1}{k} &\text{if } j< i \\
\frac{1}{n}\sum\limits_{k \geq j}^n\frac{1}{k} &\text{if } j > i 
\end{cases}
$$ 
\begin{lem}The eigenvalues and associated eigenvectors of the stochastic matrix $(K(i,j))_{1\le i,j\le n}$  are  $\beta_0=1$, $\mathbf{\Psi}_0=(1,\dots,1)$ and
$$\beta_i= 1-\frac{i}{n},\;\mathbf{\Psi}_i= \left(\frac{-1}{n - i},\dots,\frac{-1}{n - i}, 1, 0, \dots, 0\right),\;\; i=1,\dots,n-1,$$
where, in $\mathbf{\Psi}_i$, the value $-1/(n-i)$ is repeated $n-i$ times.  
\end{lem}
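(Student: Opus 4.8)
The plan is to verify the claimed eigen-data directly, after first rewriting the kernel $K$ in a more transparent probabilistic form. The key observation is that $\sigma_i=(i,i-1,\dots,2,1)$ acts on card positions as an $i$-cycle on $\{1,\dots,i\}$ while fixing $\{i+1,\dots,n\}$; hence, when $k$ is uniform on $\{0,\dots,i-1\}$, the image $\sigma_i^k(r)$ of any position $r\le i$ is uniformly distributed on $\{1,\dots,i\}$, while positions $r>i$ are left alone. Consequently the single-card chain $K$ admits the description: pick $j$ uniformly in $\{1,\dots,n\}$; if the current position $r$ satisfies $r\le j$, jump to a uniformly random position in $\{1,\dots,j\}$; if $r>j$, stay put. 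A short bookkeeping check shows this prescription reproduces the three-case formula for $K(i,j)$ given above (and, incidentally, makes reversibility with respect to the uniform measure transparent, so that $K$ is in fact a symmetric matrix).

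With this description in hand, $\beta_0=1$ with $\mathbf{\Psi}_0=(1,\dots,1)$ is just stochasticity. For $i\in\{1,\dots,n-1\}$ I would set $m=n-i$, so that $\mathbf{\Psi}_i$ equals $-1/m$ on $\{1,\dots,m\}$, equals $1$ at $m+1$, and vanishes on $\{m+2,\dots,n\}$, and $\beta_i=m/n$. The crucial elementary fact is that the partial sums of $\mathbf{\Psi}_i$ satisfy $\sum_{s=1}^{j}\mathbf{\Psi}_i(s)=-j/m$ for $j\le m$ and $\sum_{s=1}^{j}\mathbf{\Psi}_i(s)=0$ for $j\ge m+1$ (since $\mathbf{\Psi}_i$ has total sum $0$ and is supported on $\{1,\dots,m+1\}$). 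Using the probabilistic description, $(K\mathbf{\Psi}_i)(r)$ decomposes as the ``stay'' contribution from the values $j<r$, of weight $(r-1)/n$ and value $\mathbf{\Psi}_i(r)$, plus, for each $j\ge r$, the weight $1/n$ times the average $\frac{1}{j}\sum_{s=1}^{j}\mathbf{\Psi}_i(s)$. Substituting the partial-sum values and splitting the range $j\ge r$ at $m+1$, one checks in the three cases $r\le m$, $r=m+1$, $r\ge m+2$ that $(K\mathbf{\Psi}_i)(r)$ equals $-1/n$, $m/n$, and $0$ respectively, i.e. exactly $\beta_i\,\mathbf{\Psi}_i(r)$.

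Finally, since $1=\beta_0>\beta_1>\dots>\beta_{n-1}=1/n$ are $n$ distinct real numbers, the eigenvectors $\mathbf{\Psi}_0,\dots,\mathbf{\Psi}_{n-1}$ are automatically linearly independent and hence form a basis of $\mathbb{R}^n$; therefore the list above exhausts the spectrum of the $n\times n$ matrix $K$, each eigenvalue having multiplicity one, which completes the proof.

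I do not expect a genuine obstacle here, only bookkeeping. The one step worth isolating is the reformulation of $K$ as ``jump to a uniform target below a uniform threshold, otherwise stay'': with it, the eigen-equations collapse to the partial-sum identity above plus three one-line case-checks. Without that reformulation one is forced instead into a less pleasant manipulation of the nested harmonic sums $\frac{1}{n}\sum_{k\ge j}1/k$ appearing in the original formula for $K$, which reduces to the same computation after an interchange of the order of summation; either route works, but the probabilistic one is the cleaner place to start.
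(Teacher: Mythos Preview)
Your proposal is correct and, at its core, follows the paper's own strategy: simply verify that each proposed $\mathbf{\Psi}_i$ satisfies $K\mathbf{\Psi}_i=\beta_i\mathbf{\Psi}_i$. The paper's proof says no more than ``a direct computation shows that the proposed eigenvectors are indeed eigenvectors'' (plus the norm computation), leaving the reader to manipulate the nested harmonic sums in the three-case formula for $K(i,j)$.

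What you add that the paper does not is the probabilistic reformulation of $K$ as ``pick a threshold $j$ uniformly; if the current position $r$ is at or above the threshold, stay; otherwise jump to a uniform position in $\{1,\dots,j\}$.'' This buys you the clean identity $(K\psi)(r)=\tfrac{r-1}{n}\psi(r)+\tfrac{1}{n}\sum_{j\ge r}\tfrac{1}{j}\sum_{s\le j}\psi(s)$, after which the partial-sum observation $\sum_{s\le j}\mathbf{\Psi}_i(s)\in\{-j/m,0\}$ reduces each of the three cases to a one-line arithmetic check. It is not a different proof so much as a transparent execution of the ``direct computation'' the paper invokes; your remark that one could instead interchange the order of summation in the original harmonic-sum formula is exactly right and leads to the same identity. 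Your final linear-independence argument (distinct eigenvalues, hence a basis) is a point the paper leaves implicit but is of course needed to conclude that the list is exhaustive.
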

\begin{proof} The statement was extrapolated from a direct computation of the case $n=4$. A direct computation then shows that the proposed eigenvectors
are indeed eigenvectors associated with the stated eigenvalues.  These eigenvectors are not normalized and
$$\|\mathbf{\Psi}_i\|_2^2=  \frac{1}{n(n-i)} +\frac{1}{n}= \frac{n-i+1}{n(n-i)}, \;\;i=1,\dots, n-1.$$
\end{proof}
In the next lemma, we use this knowledge to compute
\begin{eqnarray*}d_2(K^t(i,\cdot),u)^2&= &\sum_{j=1}^n \left|K^t(i,j)-\frac{1}{n}\right|^2\\
&=&\sum_{k=1}^{n-1} \beta_k^{2t} \frac{\mathbf{\Psi}_k(i)^2}{\|\mathbf{\Psi}_k\|_2^2}. \end{eqnarray*}
\begin{lem}  The quantity $d_2(K^t(i,\cdot),u)^2$ equals
$$
\begin{cases} 
\displaystyle\sum_{k= 1}^{n - 2} \Big(1 - \frac{k}{n}\Big)^{2t}\frac{n}{(n - k)(n - k + 1)} + \Big(\frac{1}{n}\Big)^{2t}\frac{n}{2} \quad \text{if } i = 1, \\[5ex]
\displaystyle\sum_{k= 1}^{n - i} \Big(1 - \frac{k}{n}\Big)^{2t}\frac{n}{(n - k)(n - k + 1)} + \Big( \frac{i-1}{n}\Big)^{2t}\frac{n(i-1)}{i} \quad \text{if } 1< i < n, \\[5ex]
\Big(1 - \frac{1}{n}\Big)^{2t}(n-1) \quad \text{if } i= n. 
\end{cases}
$$ 
\end{lem}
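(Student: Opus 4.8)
The plan is to start from the spectral identity for the reversible kernel $K$ already recorded just before the lemma statement,
\[
d_2(K^t(i,\cdot),u)^2=\sum_{k=1}^{n-1}\beta_k^{2t}\,\frac{\mathbf{\Psi}_k(i)^2}{\|\mathbf{\Psi}_k\|_2^2},
\]
and simply substitute the explicit data from the two preceding lemmas. This identity is legitimate because $K$ is self-adjoint on $L^2(u)$ (it is reversible with respect to the uniform measure $u$ on $\{1,\dots,n\}$) and the eigenvalues $\beta_k=1-k/n$, $k=0,\dots,n-1$, are pairwise distinct, so $\mathbf{\Psi}_0,\dots,\mathbf{\Psi}_{n-1}$ form an orthogonal basis of $L^2(u)$ and the usual $L^2$-expansion of the density $K^t(i,\cdot)/u$ applies. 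Everything then reduces to evaluating $\mathbf{\Psi}_k(i)$ and collecting terms.

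Next I would read off the entries of $\mathbf{\Psi}_k$: it equals $-1/(n-k)$ in its first $n-k$ coordinates, equals $1$ in coordinate $n-k+1$, and vanishes thereafter. Hence, for a fixed $i$, $\mathbf{\Psi}_k(i)^2=1/(n-k)^2$ when $k\le n-i$; $\mathbf{\Psi}_k(i)^2=1$ when $k=n-i+1$ (an index lying in $\{1,\dots,n-1\}$ precisely when $i\ge 2$); and $\mathbf{\Psi}_k(i)=0$ when $k\ge n-i+2$. Combining with $\|\mathbf{\Psi}_k\|_2^2=\frac{n-k+1}{n(n-k)}$, the summand $\mathbf{\Psi}_k(i)^2/\|\mathbf{\Psi}_k\|_2^2$ equals $\frac{n}{(n-k)(n-k+1)}$ for $k\le n-i$, equals $\frac{n(i-1)}{i}$ for $k=n-i+1$ (since then $n-k=i-1$, $n-k+1=i$), and vanishes otherwise. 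Likewise $\beta_k^{2t}=(1-k/n)^{2t}$, which for $k=n-i+1$ becomes $((i-1)/n)^{2t}$.

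Substituting, for $2\le i\le n-1$ one obtains at once
\[
d_2(K^t(i,\cdot),u)^2=\sum_{k=1}^{n-i}\Big(1-\tfrac{k}{n}\Big)^{2t}\frac{n}{(n-k)(n-k+1)}+\Big(\tfrac{i-1}{n}\Big)^{2t}\frac{n(i-1)}{i},
\]
the middle case. The two boundary cases need only a small adjustment of indexing. For $i=1$, the coordinate $n-i+1=n$ is \emph{not} among $\{1,\dots,n-1\}$, so no ``$1$''-term occurs; instead $\mathbf{\Psi}_k(1)=-1/(n-k)$ for \emph{every} $k=1,\dots,n-1$, giving $\sum_{k=1}^{n-1}(1-k/n)^{2t}\frac{n}{(n-k)(n-k+1)}$, and splitting off the $k=n-1$ term (which equals $(1/n)^{2t}\frac{n}{2}$) produces the stated $i=1$ formula. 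For $i=n$, the range $k\le n-i$ is empty, so the $-1/(n-k)$-block contributes nothing and only $k=n-i+1=1$ survives, contributing $\beta_1^{2t}\cdot\frac{n(n-1)}{n}=(1-1/n)^{2t}(n-1)$, the last case.

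There is no genuine obstacle here: the computation is essentially bookkeeping once the eigenstructure is in hand. The only point requiring care is the treatment of the boundary indices — in particular recognizing that for $i=1$ the ``diagonal'' coordinate $n-i+1$ of $\mathbf{\Psi}_k$ falls outside the valid index set, so its would-be contribution is instead absorbed into the generic sum, and that for $i=n$ the first block of $\mathbf{\Psi}_k$ never reaches coordinate $n$, leaving a single surviving eigenmode.
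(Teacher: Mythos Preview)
Your proof is correct and is exactly the approach the paper intends: the paper does not give an explicit proof of this lemma, but simply records the spectral identity $d_2(K^t(i,\cdot),u)^2=\sum_{k=1}^{n-1}\beta_k^{2t}\,\mathbf{\Psi}_k(i)^2/\|\mathbf{\Psi}_k\|_2^2$ immediately beforehand, so the lemma is meant to follow by the very substitution and case analysis you carry out. Your handling of the boundary cases $i=1$ and $i=n$ is accurate and matches the stated formulas.
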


The term $\sum_{k= 1}^{n - i} \Big(1 - \frac{k}{n}\Big)^{2t}\frac{n}{(n - k)(n - k + 1)}$ can be bounded above by 
$$\frac{1}{n}\sum_{k= 1}^{n - i} \Big(1 - \frac{k}{n}\Big)^{2t-2}$$
and bounded below by  one-half of this quantity.    Set
$$B(n,t,i) \left(1-\frac{1}{n}\right)^{2t-1}=\frac{1}{n}\sum_{k= 1}^{n - i} \Big(1 - \frac{k}{n}\Big)^{2t-2}$$\begin{lem}   For $n\ge 4$, $t\ge 1$, the quantity $B(n,t)$ is bounded as follows:
\begin{itemize}
\item  If  $2\le i\le an$,  $a \le1/2$,  
$$\left(\frac{1}{n-1}+\frac{1}{4(2t-1)}\right)\le B(n,t,i)\le \left(\frac{1}{n-1}+\frac{1}{2t-1}\right).$$
 \item If $i\le an$,  $ a <1$, and  $n\ge 2/(1-a)$, then there exists $c_a>0$ such that 
 $$\left(\frac{1}{n-1}+\frac{c_a}{2t-1}\right)\le B(n,t,i)\le \left(\frac{1}{n-1}+\frac{1}{2t-1}\right).$$ 
 \item If $n-i_0\le i\le n-2$,  
 $$\frac{1}{n-1} \le B(n,t,i)\le \frac{i_0}{n-1}.$$ \end{itemize}
\end{lem}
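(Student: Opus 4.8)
The plan is to peel off the dominant term of the sum — the $k=1$ term, which produces exactly the advertised main term $\tfrac{1}{n-1}$ — and to estimate what remains by comparison with a monotone integral, treating the three ranges of $i$ separately. Write $N=n-1$. Using $\tfrac{1-k/n}{1-1/n}=\tfrac{n-k}{n-1}=1-\tfrac{k-1}{N}$ and dividing the defining identity by $(1-1/n)^{2t-1}$ gives the clean form
\[
B(n,t,i)=\frac1N\sum_{j=0}^{n-i-1}\Bigl(1-\frac jN\Bigr)^{2t-2}=\frac1{n-1}+R(n,t,i),\qquad R(n,t,i):=\frac1N\sum_{j=1}^{n-i-1}\Bigl(1-\frac jN\Bigr)^{2t-2}.
\]
Under the hypotheses of each item one has $n-i\ge 2$ (for the first item because $i\le an\le n/2$ and $n\ge4$; for the second because $n\ge 2/(1-a)$; for the third because $i\le n-2$), so $R\ge 0$, and everything reduces to bounding $R$ above and below.

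For the upper bounds, since $x\mapsto(1-x/N)^{2t-2}$ is nonnegative and decreasing on $[0,N]$,
\[
\sum_{j=1}^{n-i-1}\Bigl(1-\frac jN\Bigr)^{2t-2}\le\int_0^{n-i-1}\Bigl(1-\frac xN\Bigr)^{2t-2}\,dx\le\frac{N}{2t-1},
\]
so $R\le\tfrac1{2t-1}$ and $B\le\tfrac1{n-1}+\tfrac1{2t-1}$, the upper bound in the first two items. In the third item $R$ is a sum of at most $i_0-1$ terms, each $\le\tfrac1N$, hence $R\le\tfrac{i_0-1}{n-1}$ and $B\le\tfrac{i_0}{n-1}$; and $R\ge 0$ gives $B\ge\tfrac1{n-1}$. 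This settles the third item completely.

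The substantive part is the lower bound in the first two items, and the role of the hypothesis $i\le an$ with $a<1$ is precisely to guarantee that $R$ contains $n-i-1\ge(1-a)n-1$ terms with bases ranging down to about $a$, which is what keeps $R$ from being small. Comparing from below,
\[
\sum_{j=1}^{n-i-1}\Bigl(1-\frac jN\Bigr)^{2t-2}\ge\int_1^{n-i}\Bigl(1-\frac xN\Bigr)^{2t-2}\,dx=\frac N{2t-1}\Bigl[\Bigl(1-\tfrac1N\Bigr)^{2t-1}-\Bigl(\tfrac{i-1}{N}\Bigr)^{2t-1}\Bigr],
\]
where I used $1-\tfrac{n-i}{N}=\tfrac{i-1}{n-1}$, so that $R(n,t,i)\ge\tfrac1{2t-1}\bigl[(1-\tfrac1{n-1})^{2t-1}-(\tfrac{i-1}{n-1})^{2t-1}\bigr]$. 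For $a\le\tfrac12$ one checks $\tfrac{i-1}{n-1}\le\tfrac12(1-\tfrac1{n-1})$ — this is exactly where $a\le\tfrac12$ enters — so the subtracted power is at most $\tfrac12(1-\tfrac1{n-1})^{2t-1}$, the bracket is $\ge\tfrac12(1-\tfrac1{n-1})^{2t-1}$, and combining with a lower bound on $(1-\tfrac1{n-1})^{2t-1}$ (e.g.\ via $1-x\ge 4^{-x}$ for $x\le\tfrac12$, available since $n\ge4$) produces the explicit constant $\tfrac14$. The general case $a<1$ is the same computation with a weaker constant $c_a$ absorbing both $\tfrac{i-1}{n-1}<a$ and the resulting geometric factor.

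The main obstacle is this last step: controlling the geometric factor $(1-\tfrac1{n-1})^{2t-1}$ uniformly enough to keep the constants $\tfrac14$ and $c_a$ honest. For $t$ large compared with $n$ the remainder $R$ decays geometrically while $\tfrac1{2t-1}$ decays only like $1/t$, so the crude integral bound must be refined — for instance by retaining the first few terms of $\sum_{j\ge1}$ explicitly before integrating — and the estimate applied within the range of $t$ relevant to the $n\log n$ mixing analysis. A careful write-up should therefore either restrict to that range or state the lower bounds with the understanding that $t$ is at most of order $n$.
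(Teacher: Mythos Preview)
The paper gives no proof of this lemma; it simply follows the statement with ``These elementary estimates give the following result'' and moves on to the proposition. Your rewriting $B(n,t,i)=\tfrac1{n-1}+R(n,t,i)$ with $R=\tfrac1N\sum_{j=1}^{n-i-1}(1-j/N)^{2t-2}$ is correct, and your integral comparison and term-counting arguments cleanly dispatch the upper bounds in all three items and the two-sided bound in the third item.

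You are also right to flag the lower bounds in the first two items: as stated in the paper (for all $t\ge1$) they are false. For fixed $n$ the remainder $R$ decays geometrically in $t$ while $\tfrac1{2t-1}$ decays only like $1/t$; concretely, at $n=4$, $i=2$ one has $R=\tfrac13(2/3)^{2t-2}$, and already at $t=4$ this gives $R\approx0.0293<\tfrac1{4(2t-1)}=\tfrac1{28}\approx0.0357$. Your inequality $1-x\ge 4^{-x}$ cannot rescue this, since it still yields a factor $4^{-(2t-1)/(n-1)}$ that vanishes as $t\to\infty$. So the defect is in the statement, not in your argument: the lower bounds in items one and two hold only for $t$ up to order $n$, which is precisely the range used in the cutoff proposition that follows. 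Your final paragraph diagnoses this correctly; I would only suggest stating the restriction explicitly (say $t\le Cn$ for an appropriate $C$) rather than leaving it as a caveat, and framing it as a correction to the paper's statement rather than an obstacle in your proof.
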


These elementary estimates give the following result.
\begin{pro}  (a) For each fixed $i=1,2,3,\dots $, set $t_i(n,c)=\frac{n}{2i}(\log n \,+c)$.  Then
$$\lim_{n\ra \infty} d_2(K^{t_i(n,c)}(n-i,\cdot),u) =\left\{\begin{array}{cl} +\infty &\mbox{ if } c<0,\\
0 &\mbox{ if } c>0.\end{array}\right.$$
That is, the position of the card starting in position $n-i$ becomes random in $L^2$-sense with a cut-off at time $\frac{i}{2n}\log n$. 

(b) For each fixed $i=1,2,3,\dots$ and any $t_n$ tending to infinity,
$$\lim_{n\ra \infty} d_2(K^{t_n}(i,\cdot),u) = 0.$$
Moreover, there exists a constant $c_i>0$ such that for any  $\epsilon \in (2/n,1)$, 
$$d_2(K^{t}(i,\cdot),u)=\epsilon   \Rightarrow t(n) \in (c_i/\epsilon^2, 10/\epsilon^2).$$

(c)  For each fixed $a\in (0,1)$  set $t_a(n,c)= \frac{1}{2\log (1/a)} (\log n +c)$.  Then
$$\lim_{n\ra \infty} d_2(K^{t_a(n,c)}([an],\cdot),u) =\left\{\begin{array}{cl} +\infty &\mbox{ if } c<0,\\
0 &\mbox{ if } c>0.\end{array}\right.$$
That is, the position of the card starting in position $[an]$ becomes random in $L^2$-sense with a cut-off at time $\frac{\log n}{2\log (1/a)}$. 
\end{pro}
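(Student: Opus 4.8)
The plan is to substitute the closed form for $d_2(K^t(p,\cdot),u)^2$ from the preceding lemma, together with the two-sided bounds on $B(n,t,p)$, directly into the cut-off criteria, treating the three choices of starting position one at a time. For $1<p<n$ I write
$$d_2(K^t(p,\cdot),u)^2=\Sigma_1(n,t,p)+\Sigma_2(n,t,p),\qquad \Sigma_2(n,t,p):=\Big(\tfrac{p-1}{n}\Big)^{2t}\frac{n(p-1)}{p},$$
where $\Sigma_1$ is the $\sum_{k=1}^{n-p}$-sum (the cases $p=1$ and $p=n$ being the boundary formulas already written out). First I would record what the lemmas yield: by the comparison preceding the $B$-lemma, $\tfrac12 B(n,t,p)(1-\tfrac1n)^{2t-1}\le\Sigma_1\le B(n,t,p)(1-\tfrac1n)^{2t-1}$, so that $\Sigma_1\asymp\big(\tfrac1{n-1}+\tfrac1{2t-1}\big)(1-\tfrac1n)^{2t-1}$ with absolute constants when $p\le n/2$, while $\Sigma_1\asymp\tfrac1{n-1}(1-\tfrac1n)^{2t-1}$ when $n-p=O(1)$. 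The term $\Sigma_2$ carries the single eigenvalue $\beta_{n-p+1}=(p-1)/n$ with weight $n(p-1)/p$, of order $n$ as soon as $p$ is bounded away from $1$. This dichotomy organises all three parts: $\Sigma_2$ governs the distance when the followed card starts deep in the deck, and $\Sigma_1$ governs it when the card starts near the top.

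Parts (a) and (c) are the $\Sigma_2$-dominated cases. For (c) I would set $p=[an]$: then $\beta_{n-p+1}=([an]-1)/n\to a$ with weight $n(1+o(1))$, hence $\Sigma_2=n\exp(2t\log\tfrac{[an]-1}{n})(1+o(1))$; since $\log(([an]-1)/n)=\log a+O(1/n)$ and $t=t_a(n,c)\asymp\log n$, the error $O(t/n)$ is $o(1)$, so $\Sigma_2=n\,a^{2t_a(n,c)}(1+o(1))=e^{-c}(1+o(1))$ after using $2t_a(n,c)\log(1/a)=\log n+c$. Meanwhile $\Sigma_1\le\tfrac1n\sum_{k=1}^{n-1}(1-\tfrac kn)^{2t-2}\le\tfrac1{2t-1}=O(1/\log n)\to0$, so $d_2(K^{t_a(n,c)}([an],\cdot),u)^2\to e^{-c}$, which $\to+\infty$ as $c\to-\infty$ and $\to0$ as $c\to+\infty$: the claimed cut-off at $\tfrac{\log n}{2\log(1/a)}$. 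Part (a) is the same computation with the card starting $O(1)$ above the bottom, where $\beta_{n-p+1}$ is close to $1$ and its weight is again of order $n$, so $\Sigma_2\asymp n\exp(-2t\,(n-p+1)/n\,(1+o(1)))$ is of order $1$ precisely for $t$ a fixed multiple of $n\log n/(n-p+1)$, which at the starting positions of (a) reads $t\asymp\tfrac{n}{2i}\log n$ in the normalisation of the statement, with $c<0$ forcing $\Sigma_2\to\infty$ and $c>0$ forcing $\Sigma_2\to0$; the one new subtlety relative to (c) is that here the truncation error in $\log(1-x)=-x+O(x^2)$ sits inside an exponent multiplied by $t\asymp n\log n$, so one keeps the quadratic term and checks it contributes only $O(\log n/n)$, while $\Sigma_1$ is a sum of $O(1)$ terms each $\le1/n$ and is negligible.

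Part (b) is the $\Sigma_1$-dominated case. With the card starting at a fixed position $i$, $\beta_{n-i+1}=(i-1)/n\to0$ gives $\Sigma_2\le((i-1)/n)^2\cdot n=O(1/n)$ for every $t\ge1$, so $d_2(K^t(i,\cdot),u)^2\asymp\Sigma_1\asymp\big(\tfrac1n+\tfrac1{2t-1}\big)(1-\tfrac1n)^{2t-1}$ with $i$-dependent constants (valid once $n\ge2i$). For any $t_n\to\infty$ this $\to0$: it is $\asymp1/t_n$ while $t_n\ll n$ and decays exponentially once $t_n\gtrsim n$. For the quantitative statement, in the relevant range the solution of $d_2(K^t(i,\cdot),u)=\epsilon$ lies at $t\lesssim n$, where $(1-1/n)^{2t-1}$ is bounded below by a constant and hence $d_2(K^t(i,\cdot),u)^2\asymp1/t$; solving gives $t\asymp\epsilon^{-2}$, and inserting the explicit constants of the $B$-lemma (lower bound $\tfrac1{4(2t-1)}$, upper bound $\tfrac1{2t-1}$) confines $t$ to $(c_i/\epsilon^2,10/\epsilon^2)$.

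The whole argument is elementary once the spectral data for $K$ is in hand; the points that demand attention are (i) verifying, in each regime, which of $\Sigma_1,\Sigma_2$ dominates; (ii) tracking two flavours of error term — the truncation of $\log(1-x)$ inside an exponent multiplied by $t\asymp\log n$ (harmless) versus $t\asymp n\log n$ (which needs the quadratic correction), together with replacing $[an]$, $(i-1)/n$ and $n(i-1)/i$ by their limits; and (iii) in part (a), pinning down which eigenvalue close to $1$ is the relevant one so that the cut-off constant comes out exactly $\tfrac{n}{2i}\log n$. I expect (iii) — the bookkeeping around the slowly-moving cards at the bottom of the deck — to be the most finicky step.
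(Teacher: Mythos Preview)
Your approach is exactly what the paper intends: it offers no proof beyond the sentence ``These elementary estimates give the following result,'' so the intended argument is precisely the substitution of the explicit spectral formula for $d_2(K^t(p,\cdot),u)^2$ together with the two--sided bounds on $B(n,t,p)$, which is what you carry out. Your decomposition into the sum $\Sigma_1$ and the isolated large--weight term $\Sigma_2$, and your identification of which piece dominates in each regime, matches the structure of the lemmas and is the right organisation.

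One remark worth recording: your honest computation in part~(c) gives $d_2(K^{t_a(n,c)}([an],\cdot),u)^2\to e^{-c}$, a \emph{finite} limit for each fixed $c$, rather than the $\{+\infty,0\}$ dichotomy literally written in the proposition; you then recover the stated conclusion only by sending $c\to\pm\infty$. This is not a flaw in your argument but an indication that the proposition is phrased somewhat loosely (the cut--off phenomenon is the intended content, and the additive window does not by itself force divergence). The same issue surfaces in part~(a), where the dominant eigenvalue at starting position $p=n-i$ is $\beta_{n-p+1}=(n-i-1)/n=1-(i+1)/n$, so the $\Sigma_2$ term balances at $t\sim\frac{n}{2(i+1)}\log n$ rather than $\frac{n}{2i}\log n$; you rightly flag step~(iii) as ``the most finicky,'' and indeed carrying it through exposes this off--by--one in the stated constant. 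None of this affects the method, which is the paper's method executed in more detail than the paper itself provides.
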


We end this section with two eigenvalue data plots which concern different projections, namely, those corresponding to following a given pair or a triplet of cards instead of just one. These chains become more complex and we have not computed all there eigenvalues and eigenfunctions. Instead, this plots are based on computer assisted computations of the eigenvalues of these chains.
The first plot  is for the two-card chain on 21 cards and the second is for the  three-card chains on 21 cards.

\begin{figure}[h!]
  \includegraphics[width=0.5\linewidth]{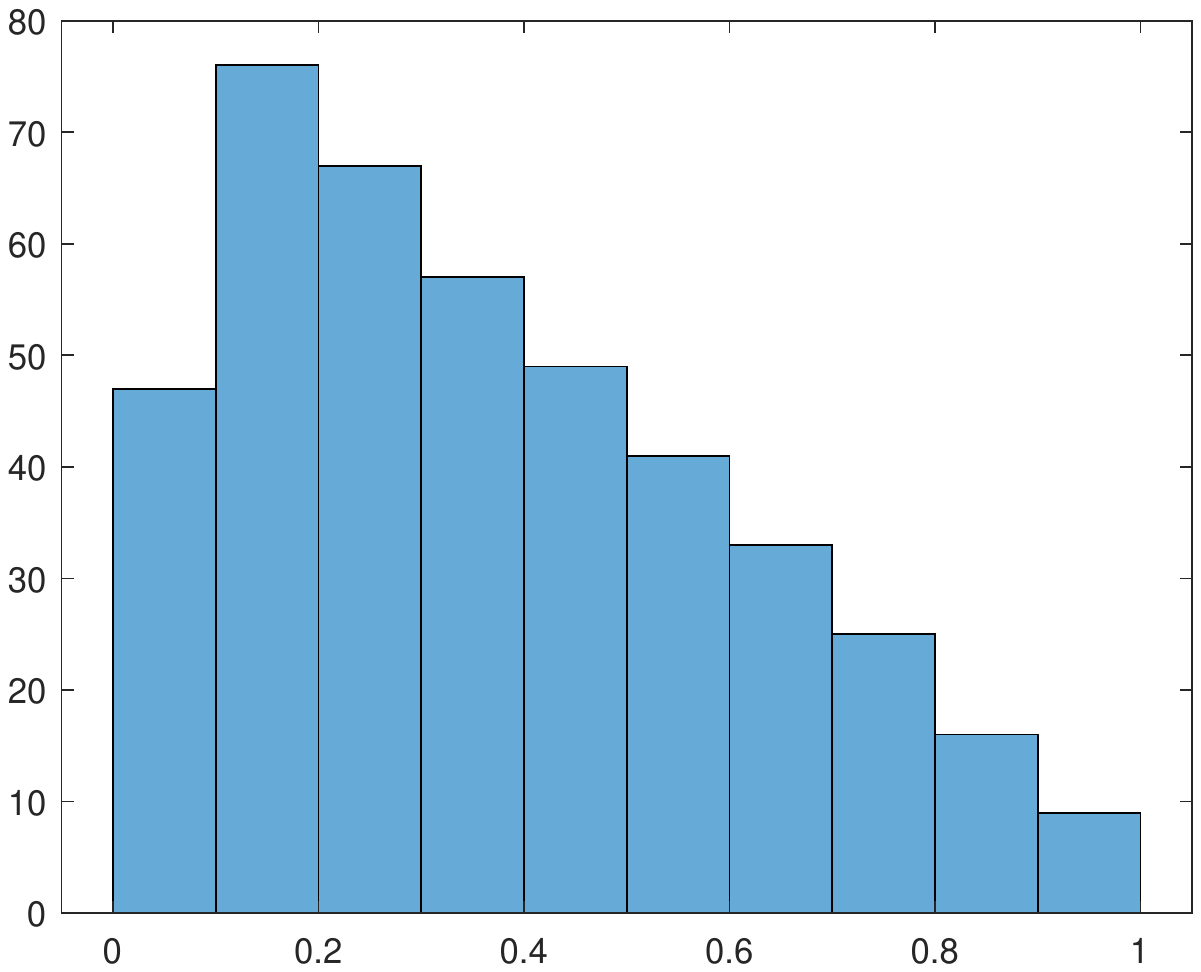}\;\;\; \includegraphics[width=0.55\linewidth]{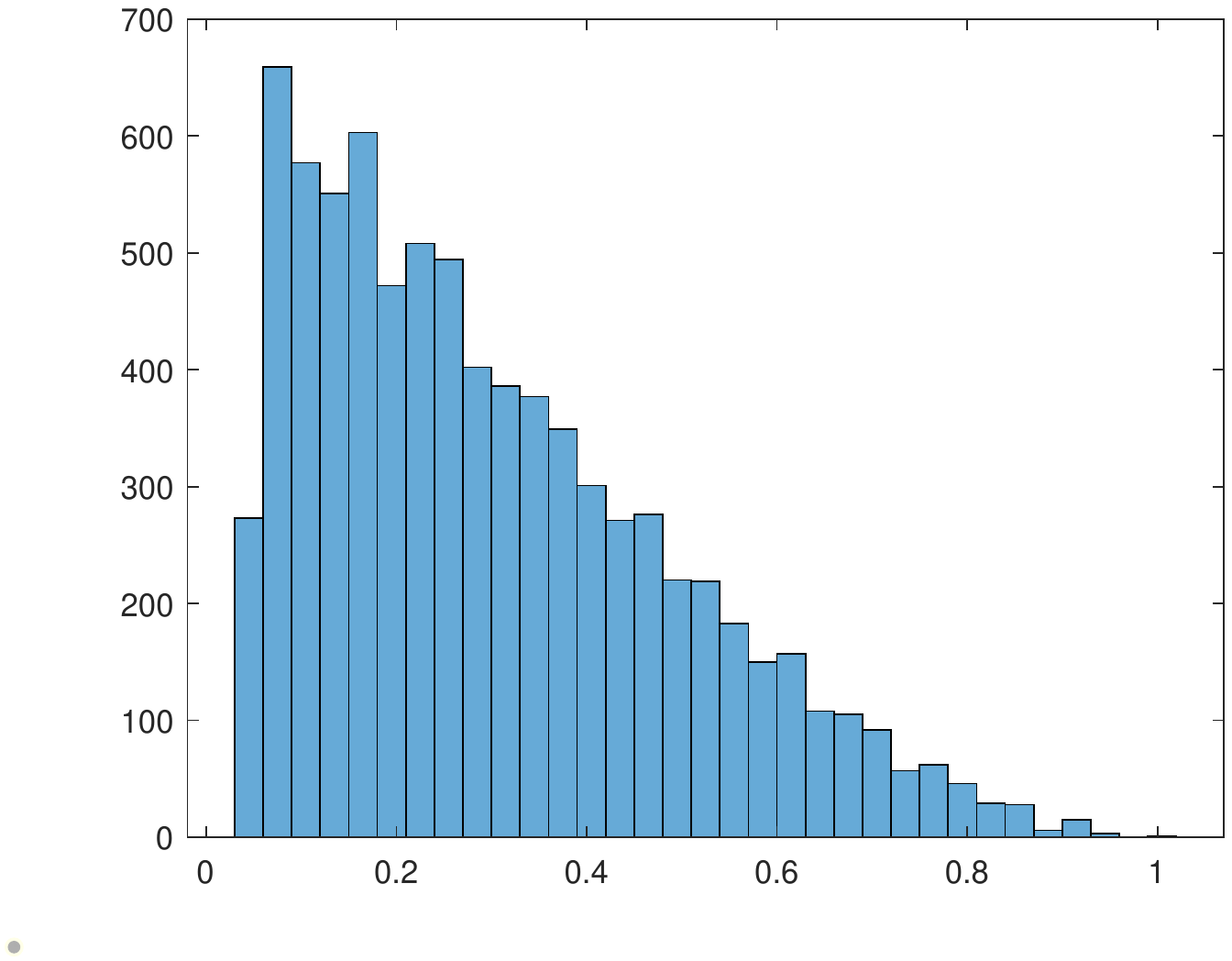}  \caption{21 cards. Left: The eigenvalue distribution for the two-card chains; Right: The eigenvalue distribution for the three-card chains. Note that all eigenvalues are positive.}
  \label{fig:two21ref}
\end{figure}

\subsection{Single card chain in $L^1$}

The relatively simple form of the eigenvalues and eigenvectors of the single card chain $K$ also allows us to determine the $L^1$-distance of $K^{t}(i,\cdot)$ from its stationary measure $u$. Namely, the diagonalization of $K$ shows that the $i$-th row of $K^t$, $K^t(i,\cdot)$, $1\le i\le n-1$, consists of the repeated entry
$$\frac{1}{n^t}\left(\frac{-(i-1)^t}{i} + \frac{i^{t-1}}{i+1} + \dots + \frac{(n-1)^{t-1}}{n}\right) + \frac{1}{n}$$
in columns $j=1$ through $i-1$,
$$\frac{1}{n^t}\left(\frac{(i-1)^{t+1}}{i} + \frac{i^{t-1}}{i+1} + \dots + \frac{(n-1)^{t-1}}{n}\right) + \frac{1}{n}$$ in column $i$,
$$\frac{1}{n^t}\left(\frac{-(i+k-1)^t}{i+k} + \frac{(i+k)^{t-1}}{i+k+1} + \dots + \frac{(n-1)^{t-1}}{n}\right) + \frac{1}{n}$$
in column $j=i+k$, $i+1\le i+k<n$, and 
$$\frac{1}{n^t}\left(\frac{-(n-1)^t}{n}\right) + \frac{1}{n}$$ in column $n$.  The last row, $i=n$, consists of the entries
$$\frac{1}{n^t}\left(\frac{-(n-1)^t}{n}\right)+\frac{1}{n} $$ in columns $1$ through $n-1$ and  
$$\frac{1}{n^t}\left(\frac{(n-1)^{t+1}}{n} \right)+ \frac{1}{n} $$ 
in column $n$.

In the case $i=n$ (single card starting at the bottom of the deck), we find that
$$\|K^t(n,\cdot)-u\|_{\mbox{\tiny TV}}= \left(1-\frac{1}{n}\right)^{t+1}$$
(indeed, this card position is uniform as soon as it is touched). 
For $1\le i\le n-1$,
\begin{eqnarray*} 2\|K^t(i,\cdot)-u\|_{\mbox{\tiny TV}}&=& \frac{1}{n^t}\left|-\frac{(n-1)^{t}}{n}\right|+\frac{i-1}{n^t}\left|\frac{-(i-1)^t}{i} + \sum_{\ell=i}^{n-1}\frac{\ell^{t-1}}{\ell+1}\right| \\ &&+  
\frac{1}{n^t}\left( \frac{(i-1)^{t+1}}{i}+
\sum_{\ell=i}^{n-1}\frac{\ell^{t-1}}{\ell+1}\right)\\
&&+ \frac{1}{n^t} \sum_{k=1}^{n-i-1} \left|\frac{-(i+k-1)^t}{i+k} + \sum_{\ell=1}^{n-(i+k)}\frac{(i+k+\ell-1)^{t-1}}{i+k+\ell} \right| \\
&=&J_1+J_2+J_3+J_4.\end{eqnarray*}
Looking at $J_2$ and $J_4$ for large $t$, i.e., $t\ge (n-1)\log n +\frac{n-1}{n-2}$, we have
$$\frac{-(i-1)^t}{i} + \sum_{\ell=i}^{n-1}\frac{\ell^{t-1}}{\ell+1}\ge 0$$
and 
$$\frac{-(i+k-1)^t}{i+k} + \sum_{\ell=1}^{n-(i+k)}\frac{(i+k+\ell-1)^{t-1}}{i+k+\ell}\ge 0,\;\;\mbox{  for } k\in\{ 1,\dots, n-i-1\}.$$
Because the sum of all the same terms in $J_1+J_2+J_3+J_4$ but without any absolute value is $$\sum_{\ell=1}^n K^t(i,\ell)-\pi(\ell)=0,$$ it follows that, for  $t\ge (n-1)\log n +\frac{n-1}{n-2}$, $2\|K^t(i,\cdot)-u\|_{\mbox{\tiny TV}}=2|J_1|$, that is,
$$\|K^t(i,\cdot)-u\|_{\mbox{\tiny TV}}=\frac{1}{n}\left(1-\frac{1}{n}\right)^{t},\;\;i\in \{1,\dots, n-1\}.$$
This, of course, occurs only much after approximate convergence has taken place. It only describe the long term asymptotic behavior
of $\|K^t(i,\cdot)-u\|_{\mbox{\tiny TV}}$, $i<n$. To describe the shorter term behavior, we consider three cases:  Botton starting positions  of the type $n-i$ for fixed $i=1,2,\dots$, top starting positions of the type $i=1,2, \dots,$ and middle of the pack starting positions of the type $[an]$, $a\in (0,1)$.  

For starting position $n-i$, $i$ fixed, we get a reasonable lower bound by focussing on the first and third terms. Write
\begin{eqnarray*}2 \|K^t(n-i,\cdot)-u\|_{\mbox{\tiny TV}}&\ge & \frac{1}{n}\left(1-\frac{1}{n}\right)^t\\
&&+\frac{1}{n^t}\left( \frac{(n-i-1)^{t+1}}{n-i}+\sum_{\ell=n-i+1}^{n}\frac{(\ell-1)^{t-1}}{\ell}\right)\\
&\ge &\frac{1}{n}\left(1-\frac{1}{n}\right)^t+ \left(1-\frac{i+1}{n}\right)^{t+1}\end{eqnarray*}
An upper-bound of the type
$$  \|K^t(n-i,\cdot)-u\|_{\mbox{\tiny TV}}\le C_i\left(\frac{1}{n}\left(1-\frac{1}{n}\right)^t+ \left(1-\frac{i+1}{n}\right)^{t}\right)$$
holds as well. This proves convergence in time of order $n/(i+1)$ with no cut-off for the bottom cards.

For starting position $i$, $i$ fixed (starting position towards the top), we have
$$2 \|K^t(i,\cdot)-u\|_{\mbox{\tiny TV}}\ge  \left(\frac{1}{n} +\frac{c_i}{t}\right)\left(1-\frac{1}{n}\right)^t .$$
The term $\frac{c_i}{t}(1-1/n)^t$ is contributed by the third and last summand in the general formula. In this last summand, namely,
$$  \frac{1}{n^t} \sum_{k=1}^{n-i-1} \left|\frac{-(i+k-1)^t}{i+k} + \sum_{\ell=1}^{n-(i+k)}\frac{(i+k+\ell-1)^{t-1}}{i+k+\ell} \right|  ,$$
restrict the first summation to those $k$ less than, say, $n/4$. In this range of $k$ values, the positive term in the absolute value dominates the negative term and we obtain a lower bound of the type (we assume $t\ge 4$)
\begin{eqnarray*} \frac{c_i}{n^t} \sum_{k=1}^{n/4} \sum_{ \ell=n/2}^{n-1}\ell ^{t-2}  &\ge &c'_i  \left(1-\frac{1}{n}\right)^{t-1}
\left(\frac{1}{n-1}\sum_{n/2}^{n-1} \left(\frac{\ell}{n-1}\right)^{t-2}\right)\\
&\ge &\frac{c''_i }{t} \left(1-\frac{1}{n}\right)^t    \end{eqnarray*}
where  we used an integral to lower bound the Riemann sum in parentheses. A matching upper-bound,
$$ \|K^t(i,\cdot)-u\|_{\mbox{\tiny TV}}\le  C_i \left(\frac{1}{n} +\frac{1}{t}\right)\left(1-\frac{1}{n}\right)^t $$
is easily obtained. The key rate of convergence is thus in $1/t$ for the top starting positions.

For starting position in the middle of the pack, $i=[an]$, $a\in (0,1) $ fixed, a similar analysis shows that
$\|K^t([an],\cdot)-u\|_{\mbox{\tiny TV}}$ is also of order 
$$\left(\frac{1}{n}+ \frac{1}{t}  \right)\left(1-\frac{1}{n}\right)^t .$$
This time, we use the second term,
$$ \frac{i-1}{n^t}\left|\frac{-(i-1)^t}{i} + \sum_{\ell=i}^{n-1}\frac{\ell^{t-1}}{\ell+1}\right| $$
to obtain a lower bound of the type $c_a(1-1/n)^t/t$.  Indeed, for $i=[an]$ and $n$ large enough,
\begin{eqnarray*} \sum_{\ell=i}^{n-1}\frac{\ell^{t-1}}{\ell+1}&\ge &\frac{(n-1)^{t-1}}{2} \sum_{\ell=[an]}^{n-1} \left(\frac{\ell}{n-1} \right)^{t-2}\frac{1}{n-1}\\
&\ge & \frac{(n-1)^{t-1}}{2} \int _{(a+1)/2}^1 x^{t-2}dx\ge   c_a \frac{(n-1)^{t-1}}{t-1}.
\end{eqnarray*}
For  $t\ge t_a$, this is larger than  twice   $(i-1)^{t-1}$, $i=[an]$.  It follows that
  \begin{eqnarray*}
  \frac{i-1}{n^t}\left|\frac{-(i-1)^t}{i} + \sum_{\ell=i}^{n-1}\frac{\ell^{t-1}}{\ell+1}\right| &\ge & \frac{c_a}{2} \frac{an}{n^{t} }\frac{(n-1)^{t-1}}{t-1}\\
  &\ge & \frac{c'_a}{t} \left(1-\frac{1}{n}\right)^t .\end{eqnarray*}

\section{Hit-and-run for top-to-random in $L^2$}\label{sec-HRTR2}
 \begin{figure}[h!]
\hspace{-.2in}  \includegraphics[width=0.3\linewidth,trim= 1in 3.7in 0.8in 3in]{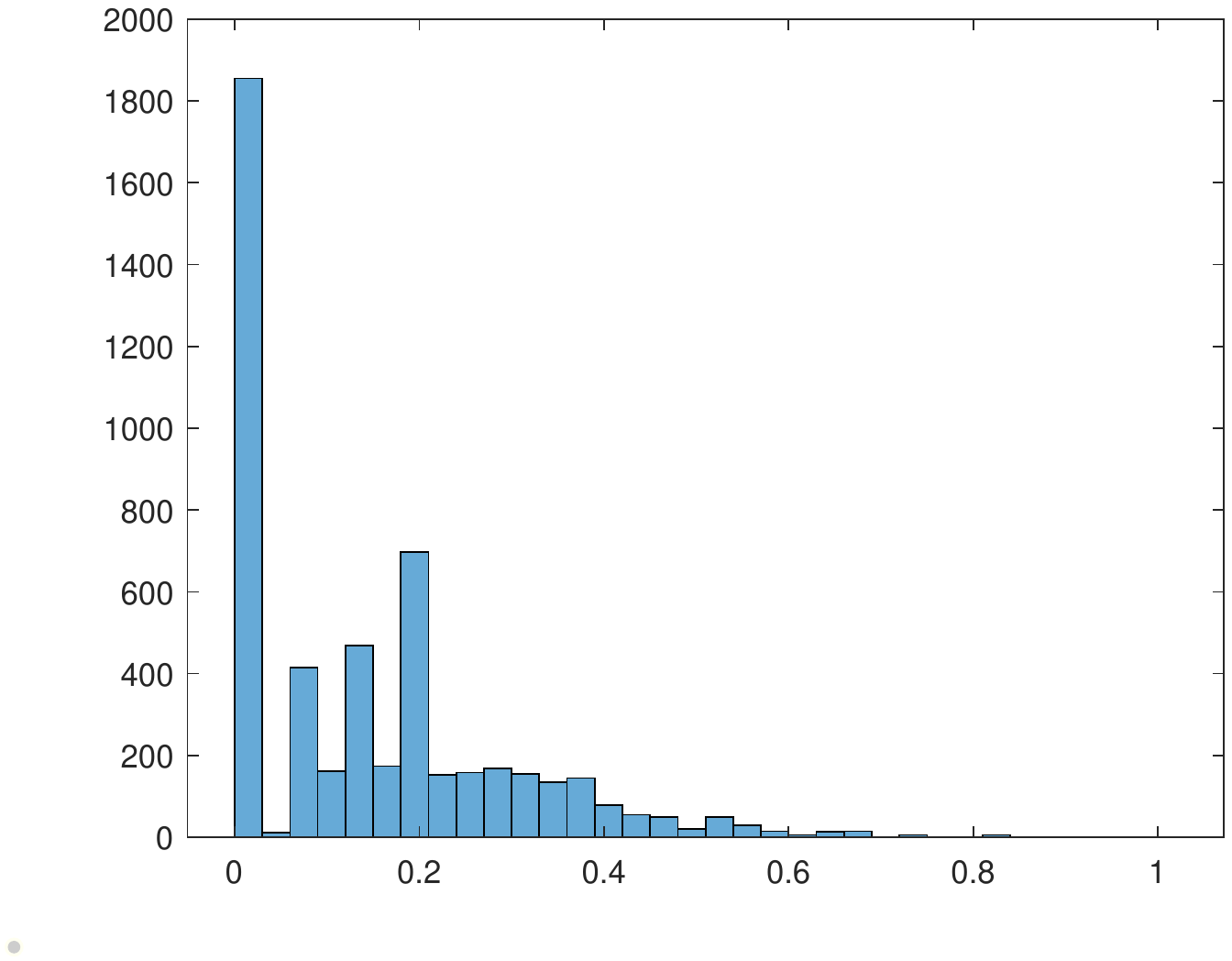}\hspace{-.3in}
  \includegraphics[width=0.3\linewidth,trim= 1in 3.7in 0.8in 3in]{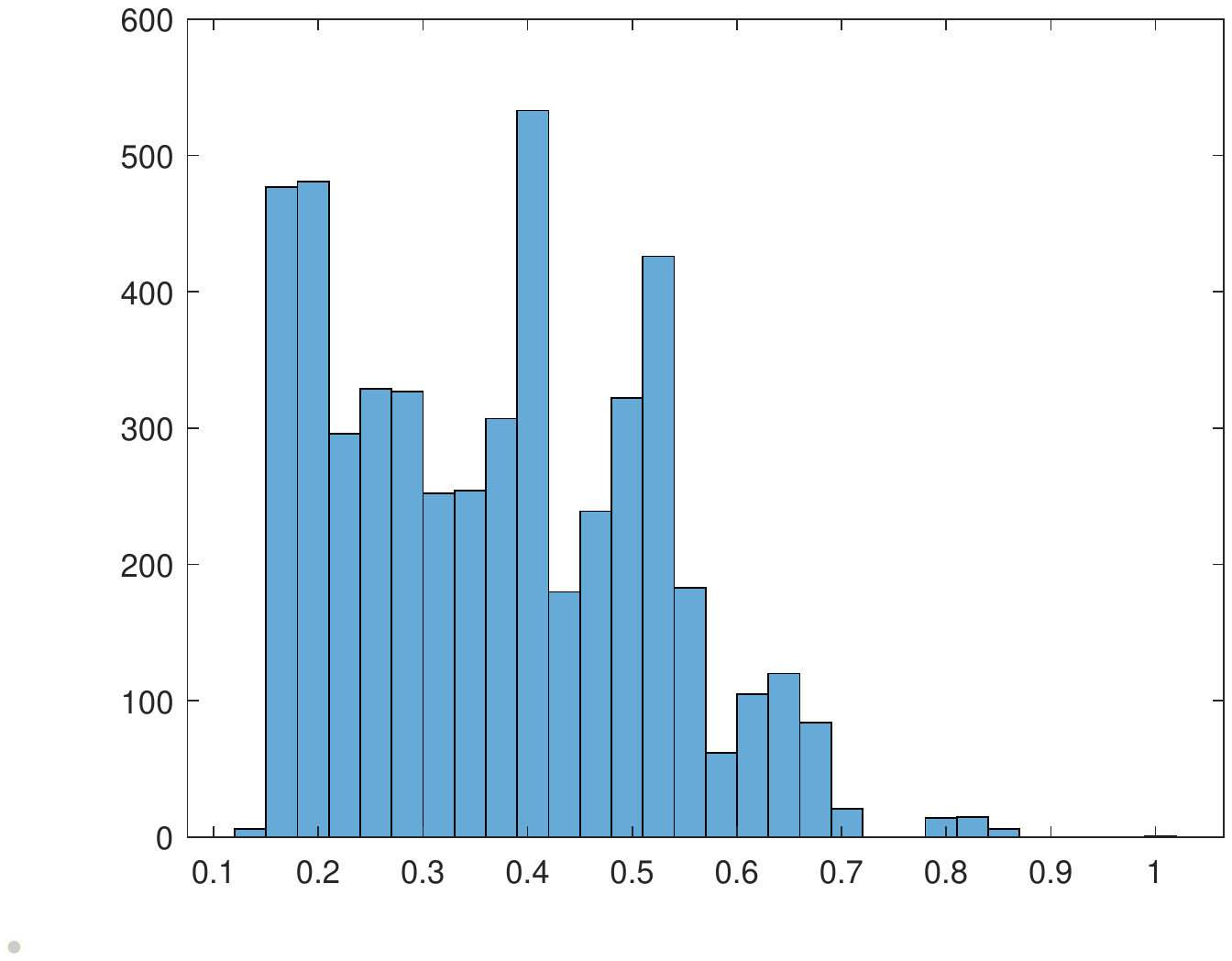}  \includegraphics[width=0.5\linewidth,trim= 0.in 1in 0.8in 3in]{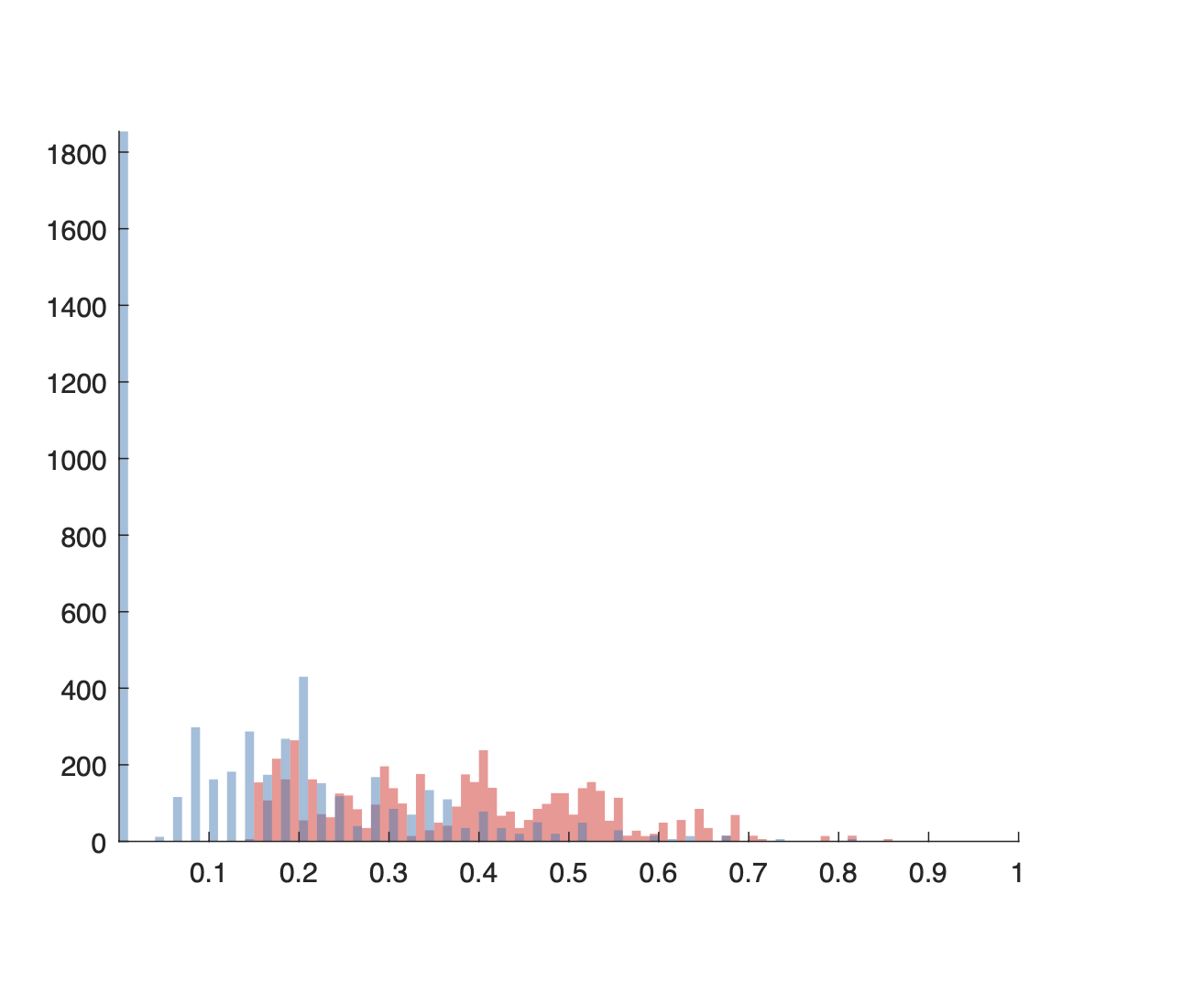}  
   \caption{Comparison of the spectrum of random-to-random (left most graphic, blue on the right most graphic) and hit-and-run for top-to-random (middle graphic, red in the right most graphic). The key difference is the higher multiplicity of very small eigenvalues in the random-to-random shuffle (most of those are actually equal to $0$). Note the different scales on the $y$ axes of the two left most graphics.}
  \label{fig:comp}
\end{figure}

In this section, we present the best results we know regarding the hit-and-run version of top-to-random driven by the measure $q$ at (\ref{HRTR})
when convergence to stationarity is measured in $L^2$-sense, that is, using $d_2(q^{(t)},u)$.
\begin{theo} \label{th-L2} 
For any $n,t$, we have
$$d_2(q^{(t)},u) \ge \sqrt{n-1} \,\left(1-\frac{1}{n}\right)^{t}.$$
The second largest eigenvalue $\beta_1$ of $q$ is bounded by $\beta_1\le 1-1/(8n)$ and, for any $n$ large enough and 
$t(n,c) \ge 9 n\log n \, + 12 n c$, $c>0$,
$$d_2(q^{(t(n,c))}
,u)\le \sqrt{5}e^{-c}.$$
\end{theo}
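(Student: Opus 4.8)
The plan is to handle the lower bound and the upper bound separately, since they require quite different tools. For the lower bound $d_2(q^{(t)},u)\ge \sqrt{n-1}\,(1-1/n)^t$, I would use the variational characterization of the $L^2$-distance via eigenvalues: since $q$ is symmetric, $d_2(q^{(t)},u)^2=\sum_{i\ge 1}\beta_i^{2t}$, so it suffices to exhibit a single eigenvalue of the convolution operator $Q$ equal to $1-1/n$ with multiplicity at least $n-1$. The natural candidate comes from the single-card projection studied in Section \ref{sec-one}: the chain $K$ has eigenvalue $\beta_1=1-1/n$ (indeed all $\beta_i=1-i/n$ there), and by the abstract projection principle recalled in that section, a $\underline{Q}$-eigenfunction pulls back to a $Q$-eigenfunction with the same eigenvalue. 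To get multiplicity $n-1$, I would observe that there is one such single-card projection for each of the $n$ cards (or, more symmetrically, use the permutation-module structure: the functions ``position of card $c$ equals $j$'' span a copy of the standard representation plus a trivial, and $Q$ acts on the standard $(n-1)$-dimensional piece by the eigenvalue $1-1/n$). Summing $(1-1/n)^{2t}$ over these $n-1$ dimensions gives exactly the claimed bound.

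For the eigenvalue estimate $\beta_1\le 1-1/(8n)$, the cleanest route is a comparison (Dirichlet form) argument: compare the hit-and-run chain $q$ with a chain whose spectral gap is already known, e.g.\ the ordinary top-to-random chain (whose relaxation behavior is classical, spectral gap of order $1/n$) or random-to-random. Since $q$ gives positive probability to $\sigma_i$ itself (the $j=1$ term in \eqref{HRTR} contributes weight $\tfrac{1}{n}\cdot\tfrac{1}{i}$ to $\delta_{\sigma_i}$), one can dominate a multiple of the top-to-random Dirichlet form by the hit-and-run Dirichlet form, losing only a constant factor, and conclude $\beta_1\le 1-c/n$; tracking constants gives $1/(8n)$. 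I expect this to be the main obstacle: getting an honest constant (rather than just ``order $1/n$'') out of a comparison inequality requires care, and one must make sure the symmetrization issues (top-to-random itself is not symmetric, so one should compare against $\mu_S\check\mu_S$ or against random-to-random) do not cost more than the budget allows.

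For the upper bound on $d_2(q^{(t(n,c))},u)$, I would split the spectrum at a threshold, say eigenvalues $\ge 1-1/n$ versus the rest. The ``large'' eigenvalues are exactly the $(n-1)$-dimensional block equal to $1-1/n$ identified above (plus possibly a controlled number of others just below it coming from following pairs/triples of cards); these contribute $\le (n-1)(1-1/n)^{2t}$, which for $t\ge 9n\log n+12nc$ is at most $e^{-2c}$ up to constants. For the remaining eigenvalues $\beta_i\le 1-1/(8n)$ I would use the crude bound $\sum_i \beta_i^{2t}\le (\max_{i} \beta_i)^{2t-2s}\sum_i \beta_i^{2s}= (1-1/(8n))^{2t-2s}\,d_2(q^{(s)},u)^2$ for a suitable reference time $s$ at which $d_2(q^{(s)},u)^2$ is bounded by a polynomial in $n$ (e.g.\ $s$ a small multiple of $n\log n$, or even use $d_2(q^{(s)},u)^2\le |\mathbf S_n|\le n^n$ with $s=1$ and absorb the $n\log|\mathbf S_n|=n\cdot n\log n$ into the $9n\log n$ budget — this is precisely where the factor $9$ comes from). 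Combining the two pieces and choosing constants yields $d_2(q^{(t(n,c))},u)\le\sqrt5\,e^{-c}$. The delicate point here is bookkeeping: one must verify that a single time budget $9n\log n+12nc$ simultaneously kills the $(n-1)$-fold block (needs $\sim \tfrac12 n\log n + \dots$) and, via the gap $1/(8n)$, the full $|\mathbf S_n|^{1/2}\sim e^{\frac12 n\log n}$ worth of mass from the rest (needs $\sim 8n\cdot \tfrac12 n\log n$, hence the coefficient on the order of $4n$–$9n$), and that the leftover constants collapse to $\sqrt5$.
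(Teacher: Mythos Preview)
Your lower-bound argument is essentially the paper's: pull back the single-card eigenvalue $1-1/n$ and invoke representation theory to get multiplicity $\ge n-1$. (The paper argues the multiplicity via the general fact that an eigenvalue of convolution occurs with multiplicity at least the dimension of any irreducible at which it appears, then rules out the sign representation; your identification of the standard permutation module is an equivalent way to see the same thing.)

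The upper-bound strategy, however, has a genuine gap. Your plan is to split the spectrum of $q$ at $1-1/n$, treat the $(n-1)$ eigenvalues equal to $1-1/n$ by hand, and kill the remaining $n!-n$ eigenvalues using only the spectral-gap bound $\beta_i\le 1-1/(8n)$. But that last step needs
\[
(n!)\Bigl(1-\tfrac{1}{8n}\Bigr)^{2t}\le C e^{-2c},
\]
which forces $t\gtrsim 4n\log(n!)\sim 4n^2\log n$, not $9n\log n$. (Your own parenthetical arithmetic ``needs $\sim 8n\cdot\tfrac12 n\log n$'' already shows the coefficient is order $n$, not order $1$.) Using just the gap cannot give an $n\log n$ upper bound here.

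The paper avoids this by keeping the \emph{full} eigenvalue comparison, not only the gap. A path-based comparison with random-to-random, using the decomposition $\sigma_{ij}=\sigma_j^{\,i}\sigma_{j-1}^{\,j-i}$ (and its inverse form), yields $\mathcal E_\mu\le 8\,\mathcal E_q$ and hence $1-\beta_i\ge\frac18(1-\alpha_i)$ for every $i$, where $(\alpha_i)$ are the random-to-random eigenvalues. One then splits according to whether $\alpha_i\le 1/2$ (crude bound $(n!)e^{-t/8}$ suffices there) or $\alpha_i>1/2$, and for the latter transfers to $d_2(\mu^{([t/12])},u)^2$ and invokes the sharp random-to-random mixing result of Bernstein--Nestoridi, which already has the $n\log n$ scale built in. Note also that your proposed direct support comparison (``$q$ puts mass $\ge 1/(ni)$ on $\sigma_i$'') only gives $\mathcal E_{\mu_S}\le n\,\mathcal E_q$, and most random-to-random moves $\sigma_{ij}$ do not lie in the support of $q$ at all; the length-two path decomposition is what produces the constant $8$.
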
    
\begin{rem} The spectral gap $\lambda=1-\beta_1$ for $q$ is at least  $1/n$ and the time to stationarity in $L^2$, $T$, is at least $\frac{1}{2} n\log n$ so that the product $\lambda T$ tends to infinity. It thus follows from \cite{CSC} that there is a cut-off for this walk on $\mathbf S_n$. The cut-off time is of order $n\log n$ but we  do not know its exact behavior.
\end{rem}
\begin{proof}[Proof of the lower-bound]  In the section concerning following a single card, we learned that $(1-1/n)$ is an eigenvalue of that chain and, consequently, also an eigenvalue of  convolution by $q$ on $\mathbf S_n$. Now, on $\mathbf S_n$, each eigenvalue has multiplicity at least equal to the dimension of any irreducible representation at which it occurs. The group $\mathbf S_n$ has two representations of dimension $1$, the trivial representation and the sign representation. All other irreducible representations have dimension at least $n-1$. So, it suffices to verify that $(1-1/n)$ does not occur only at the sign representation. This can be seen from the form of the associated eigenvector we have constructed. Alternatively, one easily computes the eigenvalue for the sign representation to be $1/2$ if $n$ is even and $(n-1)/2n$ if $n$ is odd. In any case, this gives the lower-bound
$d_2(q^{(t)},u)\ge \sqrt{n-1}(1-1/n)^t$  as stated.\end{proof}

To prove the stated upper-bound for $d_2(q^{(t)},u)$, we use the comparison technique from \cite{DSCcompG}.  Anticipating on the next section, we use the fact that hit-and-run walks have non-negative spectrum. It turn out that the most efficient comparison is with the random-to-random walk of \ref{basicexa} driven by the measure
$$\mu(\sigma)=\left\{\begin{array}{cl} 1/n & \mbox{ if }  \sigma=\mbox{id}\\
2/n^2 &\mbox{ if } \sigma=\sigma_{i(i+1)}=\sigma_{(i+1)i},\\
1/n^2 &\mbox{ if } \sigma=\sigma_{ij}, \; 1\le i\neq j\le n,  |j-i|>1\end{array}\right.$$
where 
$\sigma_{ij}=(j,j-1,\dots,i)$, $1\le i<j \le n$ . 
Recall that the Dirichlet form associated with a symmetric probability measure $\nu$ on a finite group $G$ is
$$\mathcal E_\nu(u,v)= \frac{1}{2|G|}\sum_{x,y\in G}(u(xy)-u(x))(v(xy)-v(x))\nu(y).$$
\begin{lem} The Dirichlet form $\mathcal E_\mu$  associated with the random-to-random measure $\mu$ and the Dirichlet form $\mathcal E_q$  associated with hit-and-run version of top-to-random satisfy
$$\forall \, u\in L^2(G),\;\;\mathcal E_\mu (u,u)\le 8 \mathcal E_q(u,u).$$
\end{lem}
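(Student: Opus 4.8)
The goal is to show $\mathcal{E}_\mu \le 8\mathcal{E}_q$ where $\mu$ is the random-to-random measure and $q$ the hit-and-run top-to-random measure. The standard approach is the comparison technique of Diaconis–Saloff-Coste: for each generator in the support of $\mu$, write it as a word in the generators appearing in the support of $q$, and then bound $\mathcal{E}_\mu$ in terms of $\mathcal{E}_q$ using a "flow" or "path" argument. The key identity: for any $\sigma$ written as $\sigma = y_1 y_2 \cdots y_\ell$ with each $y_m$ in the support of $q$, one has $|u(x\sigma) - u(x)|^2 \le \ell \sum_{m=1}^\ell |u(x y_1 \cdots y_m) - u(x y_1 \cdots y_{m-1})|^2$, and summing over $x$ (using left-invariance of the uniform measure) gives $\sum_x |u(x\sigma) - u(x)|^2 \le \ell \sum_{m=1}^\ell \sum_x |u(x y_m') - u(x)|^2$ after appropriate translation. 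One then sums over $\sigma \in \mathrm{supp}(\mu)$ weighted by $\mu(\sigma)$, collects the coefficient with which each $q$-generator appears, and bounds that coefficient by $8 \, q(\text{that generator})$.

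The concrete task, then, is: the support of $\mu$ consists of the identity (contributing nothing to the Dirichlet form) and the cycles $\sigma_{ij} = (j, j-1, \dots, i)$ for $1 \le i < j \le n$. The support of $q$ consists of powers $\sigma_k^m$ for $1 \le k \le n$, $0 \le m \le k-1$, where $\sigma_k = (k, k-1, \dots, 2, 1)$. I would first express each $\sigma_{ij}$ as a short product of such powers. The natural choice is $\sigma_{ij} = \sigma_j^{?} \, \sigma_{i-1}^{?}$ or something of comparable length: note that $\sigma_j$ cyclically permutes positions $\{1, \dots, j\}$, and $\sigma_{ij}$ cyclically permutes the sub-block $\{i, \dots, j\}$. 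The key elementary fact to verify is that $\sigma_{ij}$ can be realized as a product of at most two (or a small bounded number of) powers of the $\sigma_k$'s — for instance, conjugating or composing $\sigma_j^{a}$ with $\sigma_{i-1}^{b}$. Once the word length is uniformly bounded (by $2$, say), the combinatorial bookkeeping reduces to counting, for each fixed power $\sigma_k^m$ in $\mathrm{supp}(q)$, how many pairs $(i,j)$ use it and with what $\mu$-weight, then checking the ratio against $q(\sigma_k^m) = \frac{1}{nk}$.

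The main obstacle I anticipate is the decomposition step and the associated counting: I need to find an explicit, uniformly short word for $\sigma_{ij}$ in the $\sigma_k^m$, and then ensure that no single power $\sigma_k^m$ is overused — i.e., that the total $\mu$-weight routed through any fixed $\sigma_k^m$, times the word length, is at most $8\, q(\sigma_k^m)$. Since $\mu(\sigma_{ij}) \asymp 1/n^2$ and there are $O(n)$ pairs $(i,j)$ that could plausibly route through a given $\sigma_k^m$ while $q(\sigma_k^m) = 1/(nk) \ge 1/n^2$, the weights are of compatible order and the constant $8$ should come out of a careful but routine accounting (the factor $2$ for word length, a factor $2$ from the $|j-i|=1$ doubling in $\mu$, and a factor $2$ slack). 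One subtlety to watch: some powers $\sigma_{ij}^{-1} = \sigma_{ji}$ coincide, and the symmetrization of $q$ means $\sigma_k^m$ and $\sigma_k^{k-m}$ both appear, so I should use whichever orientation makes the word-length argument cleanest. I would organize the proof as: (1) state the general comparison inequality from \cite{DSCcompG}; (2) exhibit the word $\sigma_{ij} = (\text{bounded product of } \sigma_k^m\text{'s})$; (3) do the weighted count and extract the constant $8$.
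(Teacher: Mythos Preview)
Your plan is correct and matches the paper's approach exactly: the paper invokes \cite[Theorem~1]{DSCcompG}, writes each $\sigma_{ij}$ (for $i<j$) as the length-$2$ word $\sigma_{ij}=\sigma_j^{\,i}\sigma_{j-1}^{\,j-i}$ (and $\sigma_{ji}=\sigma_{i-1}^{\,j-1}\sigma_i^{\,i-j}$ for the inverse), and then the congestion count gives $A=\max_{k,\ell}\frac{1}{q(\sigma_k^\ell)}\sum_{i\neq j}2\,N(\sigma_k^\ell,\sigma_{ij})\mu(\sigma_{ij})=8k/n\le 8$. The only correction to your sketch is that the second factor is a power of $\sigma_{j-1}$, not $\sigma_{i-1}$.
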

\begin{proof}  For each $\sigma_{ij}, 1\le i\neq j\le n$, we find a product of $\sigma_k^\ell$, $1\le \ell < k\le n$, which equals $\sigma_{ij}$. There are many way to do this but the following is efficient. Observe that for $i < j$, $\sigma_{ij} = \sigma_j^i\sigma_{j-1}^{j-i}$. That is, to move the card in position $i$ down to position $j$, insert the first $i$ cards at position $j$, then insert the first $j - i$ cards now on top at position $j - 1$. After the first move, the card originally in position $j$ is at position $j - i$, so the second move places it in position $j - 1$. The other $i - 1$ cards moved down to position $j - 1$ are returned to their original spot in the second move (barring the card originally in position $i$) by sliding past them all the cards they originally slid past, which were on the top after the first move. For $i > j$, $\sigma_{ij} = \sigma_{ji}^{-1} = \sigma_{i-1}^{-(i-j)}\sigma_i^{-j} = \sigma_{i-1}^{j-1}\sigma_i^{i-j}$. 
Now we use \cite[Theorem 1]{DSCcompG} with $\widetilde{\mathcal E}=\mathcal E_mu$, $\mathcal E=\mathcal E_q$ which gives
$$\mathcal E_\mu\le A\mathcal E_q,\;\;A=\max_{\sigma: q(\sigma)>0}\left\{\frac{1}{q(\sigma)} \sum_{1\le i\neq j\le n} |\sigma_{ij}|N(\sigma,\sigma_{ij})\mu(\sigma_{ij})\right\}.$$
In the formula giving $A$, $|\sigma_{ij}|$ is the length of the product expressing $\sigma_{ij}$, which, in our case, is always equal to $2$; $N(\sigma,\sigma_{ij})$ is the number of time $\sigma$ appears in the product for $\sigma_{ij}$. So, if $\sigma=\sigma_k^{\ell}$
for some $2\le k\le n$ and $1\le \ell\le k-1$, $1\le i<j\le n$,
$$N(\sigma,\sigma_{ij})=\left\{\begin{array}{cl} 0 & \mbox{ if }  (k,\ell)\not\in\{(j,i),(j-1,j-i)\} \\
 1 &\mbox{ if }  (k,\ell)\in\{(j,i),(j-1,j-i)\}. \end{array}\right.$$
 When $1\le j<i\le n$, we similarly have 
 $$N(\sigma,\sigma_{ij})=\left\{\begin{array}{cl} 0 & \mbox{ if }  (k,\ell)\not\in\{(i-1,j-1),(i,i-j)\} \\
 1 &\mbox{ if }  (k,\ell)\in\{(i-1,j-1),(i,i-j)\}. \end{array}\right.$$

 For $1\le \ell<k<n$, this gives
 $$
 \left\{\frac{1}{q(\sigma_k^\ell)} \sum_{1\le i\neq j\le n} |\sigma_{ij}|N(\sigma,\sigma_{ij})\mu(\sigma_{ij})\right\}=
 8k/n $$
 whereas  for $(k,\ell)=(n,\ell)$ the result is $4$.  
  \end{proof}
  \begin{proof}[Proof of the upper-bound in Theorem \ref{th-L2}]  Given the comparison inequality
  $\mathcal E_\mu\le 8 \mathcal E_q$ between quadratic forms, Lemma 6 of \cite{DSCcompG} (see also \cite[Theorem 10.2]{SCRW})
  provides a comparison inequality. Here we used the same idea in a slightly tighter way.  Let $0\le \alpha_{|G|-1}\le \dots\le \alpha_1<\alpha_0=1$
  be the eigenvalues for random-to-random. The inequality   $\mathcal E_\mu\le 8 \mathcal E_q$ gives
  $1-\beta_i\ge \frac{1}{8} (1-\alpha_i)$. Split the sum
  $$  d_2(q^{(t)},u)^2 =\sum_{i=1}^{|G|-1} \beta_i^{2t} \le \sum_1^{|G|-1}e^{-2t(1-\beta_i)}$$ 
  into two sums, the  sum over those indices $i$ such that $\alpha_i\le 1/2$ and the sum over the others indices.  For the first sum, write
 $$\sum_{i: \alpha_i\le 1/2}e^{-2t(1-\beta_i)}\le (n!) e^{- t/8}.$$
 For the second sum, note that $e^{-3(1-x)/2}\le x$ when $x\in[.5,1]$, and write
\begin{eqnarray*}
\sum_{i: \alpha_i\ge 1/2} e^{-2t(1-\beta_i)}&\le &\sum_{i: \alpha_i\ge 1/2} e^{-2t(1-\beta_i)}\\
&\le &\sum_{i: \alpha_i\ge 1/2} e^{-t(1-\alpha_i)/4}\\ 
&\le &\sum_{i: \alpha_i\ge 1/2} \alpha_i ^{ t/6} \le d_2(q^{([t/12])},u)^2 .\end{eqnarray*}
This gives
\begin{equation}\label{d-comp}
 d_2(q^{(t)},u)^2\le (n!)e^{-t/8} +d_2(q^{([t/12])},u)^2.\end{equation}
 In \cite{BN}, it is proved that the spectral gap for $\mu$ is asymptotically equal to $1/n$ and that   
 $$d_2(\mu^{(s)},u)^2\le 4 e^{-2c} \mbox{ for any } s\ge \frac{3}{4} n \log n +cn,\;c>0,$$
  as long as $n$ is sufficiently large (let us note that  this is a rather difficult result).   Using this in (\ref{d-comp}) yields the upper-bound stated in Theorem \ref{th-L2}.
  \end{proof}

   \section{Positivity of the spectrum}  \label{sec-pos}
   In this final section, we prove Theorem \ref{th-pos}.  Given a general hit-and-run random walk 
   driven the measure $q_S$ at (\ref{def-HR}) on a finite group $G$, we set
   $$Q(x,y)= q_S(x^{-1}y)=\frac{1}{k} \sum_{i=1}^k \frac{1}{m_i} \sum_{j=0}^{m_i-1} \delta_{s_i^j}(x^{-1} y), \quad x,y \in G.$$
   This defines  a self-adjoint operator $f\mapsto Qf=\sum _yQ(\cdot,y)f(y)$ acting on the space $H=L^2(G)$ equipped with the inner product
   $$\langle f_1,f_2 \rangle = \frac{1}{\vert G \vert} \sum_{x\in G} f_1(x) f_2(x).$$  Let 
   $$\beta_{|G|-1}\le\dots \le \beta_1\le \beta_0=1$$ 
be the $|G|$ eigenvalues of  this operator. Because $Q$ is Markov, these eigenvalues are contained in the interval $[-1,1]$.
The theorem we want to prove, Theorem \ref{th-pos}, asserts that they are in fact in the interval $[0,1]$, that is, that $Q$ is non-negative in the sense that
$\langle Qf,f\rangle \ge 0$. The proof follows the main idea of \cite{RUpos} which consists in writing $Q$ in the product  form $$Q= P^*RP$$ using  auxiliary operators $P,R,P^*$ where $R=R^2$ is acting on the  extended Hilbert space 
$H_{\text{aux}}$, the space of functions on $G\times \{1,\dots,k\}$ equipped with its natural inner product $\langle \cdot,\cdot\rangle _{\text{aux}}$. Because $R=R^2$ and $P^*$ is the formal adjoint of $P$, such a decomposition establishes that $$\langle Qf,f\rangle=\langle P^*RP f,f\rangle =\langle  RPf,RPf\rangle_{\text{aux}}\ge 0.$$ To use such a decomposition is a key insight from \cite{RUpos}  but it also appears in \cite[Remark~4.4]{Ul14} and \cite[Lemma~3.1]{dyer2014structure}.

Define the auxiliary Hilbert space $H_{\text{aux}} = \mathbb{R}^{G\times \{1,\dots, k\}}$ equipped with inner-product
\[
	\langle g_1,g_2\rangle_{\text{aux}} 
:= \frac{1}{k \vert G \vert} \sum_{x\in G} \sum_{i=1}^k g_1(x,i) g_2(x,i),
\]
where $g_1,g_2 \in H_{\text{aux}}$. Let $P\colon H \to H_{\text{aux}}$ denote the bounded linear operator given by
\[
	P f(x,i) = f(x), \quad (x,i) \in G\times\{1,\dots,k\}.
\] 
Note that the adjoint operator, $P^*\colon H_{\text{aux}} \to H$ of $P$, is given  by
$$P^* g(x) = \frac{1}{k}\sum_{i=1}^k g(x,i).$$
That $P^*$ is the adjoint of $P$ means here that 
$\langle P^*g,f \rangle = \langle g,Pf \rangle_{\rm aux}$ for any $f\in H$ and $g\in H_{\text{aux}}$, which can be verified by a simple calculation. 
The kernels of these operators are
\begin{align}
P((x,i),y) & = \delta_x(y),\\
P^*(x,(y,i)) & = \frac{\delta_{x}(y)}{k}, 
\end{align} 
for any $x,y \in G$ and $i\in \{1,\dots,k\}$.  For any pair $(x,i)\in G\times\{1,\dots,k\}$, call
\[
	\mathcal{Z}(x,i) := \{ x_0,\dots,x_{m_i-1} \mid x_i := x s_i^j, j=0,\dots, m_i-1 \}
\]
the orbit of $x$ in $G$ under the cyclic subgroup $\langle s_i\rangle=\{ s_i^j: j=0,\dots, m_i-1\}$ generated by $s_i$ in $G$.
Define a  Markov transition kernel $$R(\cdot,\cdot) \mbox{ on  }(G\times \{1,\dots,k\})^2$$  by setting
\[
	R ((x,i_1), (y,i_2)) := \delta_{i_1}(i_2) \frac{\delta_{\mathcal{Z}(x,i_1)}(y)}{m_{i_1}}.
\]
It induces a  Markov operator, $R \colon H_{\text{aux}} \to H_{\text{aux}}$,  given by
\[
	Rg (x,i) = \frac{1}{m_i} \sum_{z\in \mathcal{Z}(x,i)} g(z,i), \qquad g\in H_{\rm aux}.
\]
Because  
$$x\in \mathcal{Z}(y,i) \;\mbox{ if and only if }\; y\in \mathcal{Z}(x,i)$$ the operator $R$ is symmetric, i.e.,
\begin{eqnarray*}
	R((x,i_1),(y,i_2)) 
	&=&  \delta_{i_1}(i_2) \frac{\delta_{\mathcal{Z}(x,i_1)}(y)}{m_{i_1}} 
	\\
	&=& \delta_{i_2}(i_1) \frac{\delta_{\mathcal{Z}(y,i_2)}(x)}{m_{i_2}} 
	= 	R((y,i_2),(x,i_1)). 
\end{eqnarray*}
Thus, the corresponding operator $R \colon H_{\rm aux} \to H_{\rm aux}$ is self-adjoint.
\begin{lem} The operator $R$ satisfies $R^2=R$ and
$Q=R^* RP$.
\end{lem}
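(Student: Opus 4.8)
The plan is to establish the two asserted identities by unwinding the definitions of $P$, $R$, $P^*$: the first says that $R$ is an orthogonal projection (self-adjointness is already in hand, so idempotency is all that is left), and the second exhibits $Q$ as the conjugate $P^*RP$ of that projection. Theorem~\ref{th-pos} then follows in one line, exactly as announced in the preamble to this section.

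For $R^2=R$, the key observation is that $R$ acts fiberwise in the index $i$, and within the fiber $i$ it replaces $g(\cdot,i)$ by its average over the $\langle s_i\rangle$-orbit. Since $m_i$ is the order of $s_i$, right multiplication by $\langle s_i\rangle$ acts freely on $G$, so the orbits $\mathcal Z(x,i)$, $x\in G$, partition $G$ into blocks of exactly $m_i$ elements, and $z\in\mathcal Z(x,i)$ forces $\mathcal Z(z,i)=\mathcal Z(x,i)$. Hence $Rg(\cdot,i)$ is constant on each orbit, and averaging a function that is already constant on orbits returns it unchanged; concretely I would write
\[
 R^2 g(x,i)=\frac1{m_i}\sum_{z\in\mathcal Z(x,i)}Rg(z,i)
 =\frac1{m_i}\sum_{z\in\mathcal Z(x,i)}Rg(x,i)=Rg(x,i).
\]
So $R$ is the orthogonal projection of $H_{\mathrm{aux}}$ onto the functions that, in each fiber $i$, are constant along $\langle s_i\rangle$-orbits.

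For the factorization, I would compute $P^*RP$ on an arbitrary $f\in H$: from $Pf(x,i)=f(x)$ one gets $RPf(x,i)=\tfrac1{m_i}\sum_{j=0}^{m_i-1}f(xs_i^{\,j})$, and then
\[
 P^*RPf(x)=\frac1k\sum_{i=1}^k RPf(x,i)
 =\frac1k\sum_{i=1}^k\frac1{m_i}\sum_{j=0}^{m_i-1}f(xs_i^{\,j})
 =\sum_{y\in G}q_S(x^{-1}y)f(y)=Qf(x),
\]
using the definition (\ref{def-HR}) of $q_S$ together with $Q(x,y)=q_S(x^{-1}y)$. This is the asserted identity $Q=P^*RP$ (equivalently $Q=P^*R^*RP$, since $R=R^*=R^2$). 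Combining the two identities with the adjoint relation $\langle P^*g,f\rangle=\langle g,Pf\rangle_{\mathrm{aux}}$ then gives, for all $f\in H$,
\[
 \langle Qf,f\rangle=\langle RPf,Pf\rangle_{\mathrm{aux}}
 =\langle R^2Pf,Pf\rangle_{\mathrm{aux}}=\langle RPf,RPf\rangle_{\mathrm{aux}}\ge 0,
\]
which is exactly Theorem~\ref{th-pos}.

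I do not expect a genuine obstacle here: once $P$, $R$, $P^*$ are in place the whole argument is bookkeeping. The two points that require a moment's care are (i) checking that the $\langle s_i\rangle$-orbits really do partition $G$ with each orbit of size exactly $m_i$ — this is where the choice of $m_i$ as the order of $s_i$ enters, and it is what makes the fiberwise averaging operator idempotent — and (ii) keeping the three normalizing constants straight ($1/|G|$ on $H$, $1/(k|G|)$ on $H_{\mathrm{aux}}$, $1/m_i$ inside $R$), so that $P^*$ as written is genuinely the adjoint of $P$. The one non-routine ingredient, the product decomposition $Q=P^*RP$ itself, is precisely the idea imported from \cite{RUpos} and is handed to us.
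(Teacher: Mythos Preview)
Your proposal is correct and follows essentially the same approach as the paper: the $R^2=R$ argument is identical (orbit invariance $\mathcal{Z}(z,i)=\mathcal{Z}(x,i)$ for $z\in\mathcal{Z}(x,i)$, together with $|\mathcal{Z}(x,i)|=m_i$), and for the factorization you compute $P^*RPf(x)$ at the level of functions while the paper computes $P^*RP(x,y)$ at the level of kernels, which is the same calculation. Note that the statement's ``$Q=R^*RP$'' is a typo for $Q=P^*RP$, which you have correctly interpreted and proved.
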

\begin{proof}[Proof of $R^2=R$] For arbitrary $g\in H_{\rm aux}$ we have
\begin{align*}
	R^2 g(x,i) 
	& = \frac{1}{m_i} \sum_{y\in \mathcal{Z}(x,i)} Rg(y,i)
	  = \frac{1}{m_i} \sum_{y\in \mathcal{Z}(x,i)} \sum_{z\in \mathcal{Z}(y,i)} \frac{g(z,i)}{m_i}
	  = Rg(x,i).
\end{align*}
Here we used the facts  that for $y\in \mathcal{Z}(x,i)$ we have
$
 \mathcal{Z}(x,i) = \mathcal{Z}(y,i)
$ 
and $\vert \mathcal{Z}(x,i)\vert = m_i$. \end{proof}

\begin{proof}[Proof of $Q=P^*RP$]
For any $x,y \in G$ we have
\begin{align*}
	P^* R P (x,y) 
& = \frac{1}{k} \sum_{i=1}^k R P ((x,i),y) 
  = \frac{1}{k} \sum_{i=1}^k \sum_{z\in \mathcal{Z}(x,i)} \frac{P((z,i),y)}{m_{i}} \\
& = \frac{1}{k} \sum_{i=1}^k 
		\frac{1}{m_i}\sum_{z\in \mathcal{Z}(x,i)} \delta_{y}(z)
  = \frac{1}{k} \sum_{i=1}^k 
  \frac{1}{m_i} \sum_{j=0}^{m_i-1} \delta_{y}(xs_i^j)\\
& = Q(x,y). 
\end{align*}
Here we used the definition of $\mathcal{Z}(x,i)$ and, in the last equality, that $
	y=xs_i^j $ if and only if $s_i^j = x^{-1} y.$
\end{proof}

\section{Final remarks}

\begin{exa}[Example where hit-and-run is faster] Assume that $G=(\mathbb Z/n\mathbb Z)^d$ and $S=(0,e_1,-e_1,\dots,e_d,-e_d)$ where $e_j=(0,\dots, 0,1,0, \dots,0)$ with the $1$ in position $j$, $1\le j\le d$. 
In $L^2$ and $L^1$, the walk driven by $\mu_S$ mixes in time $ \frac{d\log d}{2(1-\cos 2\pi/n)}$ (as $d$ (and possibly) $n$ tends to infinity).  The measure $q_S$ is given by
$$q_S(0)= \frac{n+2d}{n(1+2d)}\sim \frac{1}{2d}+ \frac{1}{n}$$
and, for $m\in \{1,\dots, n-1\}$ and $j\in \{1,\dots,d\}$, 
$$q_S( me_j)=\frac{2}{(1+2d)n}\sim \frac{1}{dn}.$$
This is very close to the walk that simply takes a random step in a  random coordinate and thus behaves similarly.  The mixing times for $q_S$ are different in $L^1$ and in $L^2$.
 In $L^1$ (or total variation), the mixing time is $d\log d$ (based on the coupon collector problem). In $L^2$, the mixing time is $d\log (dn)$. In both cases there is a gain over $\mu_S$ of order $n^2$.
 See \cite[page 323]{SCRW} and \cite[page 2154]{DSCcompG}.
\end{exa}

\begin{exa}[Example when hit-and-run is a little slower] Let us consider briefly the  example of random-transposition on $\mathbf S_n$. 
Because all generators have order $2$, the measure 
$q_S$ gives probability  $\frac{1}{n}+ \frac{n-1}{2n}=\frac{1}{2}(1+\frac{1}{n})$ to the identity and probability $\frac{1}{n^2}$ to any transposition. 
It follows that the hit-and-run random walk based on random transposition has a cut-off in total variation and $L^2$ at tine $n\log n$, a slow-down by a factor of $1/2$ compared to its simple random walk counterpart. 
\end{exa}

\noindent{\bf Remarks regarding hit-and-run for top-to-random}
Because the hit-and-run shuffle based on top-to-random is the focus of this paper, it is worth pointing out that it can be described naturally without reference to the general hit-and-run 
construction.  Namely, the measure $q$ at (\ref{HRTR}) can be alternatively obtained as follows:  pick a position $i$  uniformly at random in $\{1,\dots, n\}$
and then pick a packet size $j$ uniformly at random in $\{1,\dots,i\}$. Pick-up the packet of the top $j$ cards and place it below the card originally at position $i$.  This is clearly different from the top-$m$-to-random shuffles studied in \cite{DFP}.  There are two shuffles mechanisms described in \cite{DSCcompG} which bear some close similarities with the hit-and-run top-to-random shuffle described above.  They are:
\begin{itemize}
\item  The {\em crude overhand shuffle} 
\cite[page 2148]{DSCcompG}. Top, middle and bottom packets are identify using two random positions $1\le a\le b\le n$ and the order of the packets are changed as follows: top goes to bottom, middle remains in the middle, bottom goes to the top. The pair of positions $a\le b$  is chosen by  picking $a$ uniformly in $\{1,\dots, n\}$ and $b$ uniformly in $\{a,\dots, n\}$. Note that this gives weight $1/n$ to the identity which is obtained for $a=b=n$.
An $L^2$ mixing time upper-bound of order $n\log n$ is proved in \cite{DSCcompG} and a $L^1$ mixing time lower bound based on a coupon collector argument is also stated in \cite{DSCcompG}. However, although the coupon collector argument described in \cite{DSCcompG} makes heuristic sense, it seems that its detailed implementation is unclear because the probability that a pair of adjacent card be broken up depends on the position of the cards. This is worth mentioning here because the exact same difficulty appears for the hit-and-run version of top-to-random which is the focus of the present article.
\item The {\em Borel shuffle} \cite[page 2150]{DSCcompG} (which is taken from a book on the game of Bridge by Borel and Ch\'eron from 1940). In this shuffle, a middle packet is removed from the deck and placed on top. If  $(a,b)$, $1\le a\le b\le n$, describes the position of the top and bottom card of the packet removed, $(a,b)$ is picked with probability $1/{{n+1}\choose 2}$ and this gives probability $2/(n+1)$ to the identity which is obtained for any of the choices $(1,b)$, $1\le b\le n$.  An $L^2$ mixing time upper-bound of order $n\log n$ is proved in \cite{DSCcompG} as well as a $L^1$ mixing time lower bound based on a coupon collector argument (for this shuffle, the coupon collector argument works fine). 
\end{itemize}

\bibliographystyle{amsplain}

\providecommand{\bysame}{\leavevmode\hbox to3em{\hrulefill}\thinspace}
\providecommand{\MR}{\relax\ifhmode\unskip\space\fi MR }
\providecommand{\MRhref}[2]{%
  \href{http://www.ams.org/mathscinet-getitem?mr=#1}{#2}
}
\providecommand{\href}[2]{#2}

\end{document}